  \providecommand*{\toclevel@author}{999}
  \providecommand*{\toclevel@title}{0}
\theoremstyle{plain}
  \newtheorem{theorem}{Theorem}
  \newtheorem{proposition}{Proposition}
  \newtheorem{lemma}{Lemma}
  \newtheorem{corollary}{Corollary}
\theoremstyle{definition}
\theoremstyle{remark}
  \newtheorem{remark}{Remark}
\newcommand{\sob}{{\rm sob}}
\newcommand{\kor}{{\rm kor}}
\begin{document}

	\title{The fast reduced QMC matrix-vector product}

	\author{Josef Dick, 
	Adrian Ebert,
    Lukas Herrmann, 
    Peter Kritzer,
    Marcello Longo 
}

\date{\today}

\maketitle

\begin{abstract}
We study the approximation of integrals of the form $\int_D f(\bsx^\top A) \rd \mu(\bsx)$, where $A$ is a matrix, by quasi-Monte Carlo (QMC) rules $N^{-1} \sum_{k=0}^{N-1} f(\bsx_k^\top A)$. We are interested in cases where the main computational cost in the approximation arises from calculating the products $\bsx_k^\top A$. We design QMC rules for which the computation of $\bsx_k^\top A$, $k = 0, 1, \ldots, N-1$, can be done in a fast way, and for which the approximation error of the QMC rule is similar to the standard QMC error. We do not require that the matrix $A$ has any particular structure.

Problems of this form arise in some important applications in statistics and uncertainty quantification. For instance, this approach can be used when approximating the expected value of some function with a multivariate normal random variable with some given covariance matrix, or when approximating the expected value of the solution of a PDE with random coefficients.

The speed-up of the computation time of our approach is sometimes better and sometimes worse than the fast QMC matrix-vector product from [Josef Dick, Frances Y. Kuo, Quoc T. Le Gia, and Christoph Schwab, Fast QMC Matrix-Vector Multiplication, SIAM J. Sci. Comput. 37 (2015), no. 3, A1436--A1450]. As in that paper, our approach applies to lattice point sets and polynomial lattice point sets, but also applies to digital nets (we are currently not aware of any approach which allows one to apply the fast QMC matrix-vector paper from the aforementioned paper of Dick, Kuo, Le Gia, and Schwab to digital nets).

The method in this paper does not make use of the fast Fourier transform, instead we use repeated values in the quadrature points to derive a significant reduction in the computation time. Such a situation naturally arises from the reduced CBC construction of lattice rules and polynomial lattice rules. The reduced CBC construction has been shown to reduce the computation time for the CBC construction. Here we show that it can additionally be used to also reduce the computation time of the underlying QMC rule. One advantage of the present approach is that it can be combined with random (digital) shifts, whereas this does not apply to the fast QMC matrix-vector product from the earlier paper of Dick, Kuo, Le Gia, and Schwab.

\end{abstract}

\noindent\textbf{Keywords:} Matrix-vector multiplication, quasi-Monte Carlo, high-dimensional integration, 
lattice rules, polynomial lattice rules, digital nets, PDEs with random coefficients.
 \noindent\textbf{2020 MSC:} 65C05, 65D30, 41A55, 11K38.

\section{Introduction and problem setting} \label{sec:intro}

We are interested in approximating integrals of the form
\begin{equation}\label{eq:integral}
\int_{D} f(\bsx^\top A) \rd \mu(\bsx),
\end{equation}
for a domain $D \subseteq \R^s$, an $s \times \tau$-matrix $A \in \R^{s \times \tau}$,
and a function $f: D \to \R$, by quasi-Monte Carlo (QMC) integration rules of the form
\begin{equation} \label{eq:QMC-rule}
	Q_N(f) = \frac1N \sum_{k=0}^{N-1} f(\bsx_k^\top A),
\end{equation}
where we use deterministic cubature points $\bsx_0,\bsx_1,\ldots,\bsx_{N-1} \in D$.
We write $\bsx_k = (x_{1,k}, x_{2,k}, \ldots, x_{s,k})^\top$ for $0\le k \le N-1$. 
In most instances, $D = [0,1]^s$ and the measure $\mu$ is the Lebesgue measure (or $D = \mathbb{R}^s$ and $\mu$ is the measure corresponding to the normal distribution).

Furthermore, define the $N \times s$-matrix
\begin{equation}\label{QMC_matrix}
  X=\begin{pmatrix}
	\bsx_0^\top \\
	\bsx_1^\top  \\
	\vdots \\
	\bsx_{N-1}^\top \\
    \end{pmatrix}
  \in \R^{N \times s},
\end{equation}
whose $N$ rows consist of the different cubature nodes. We are interested in situations where the main computational cost of computing \eqref{eq:QMC-rule} arises from the vector-matrix multiplication $\bsx_k^\top A$ for all $N$ points, i.e., we need to compute $X A$, which requires $\calO(N s \,t)$ operations. Let $A = (\bsA_1, \bsA_2, \ldots, \bsA_{\tau})$, where $\bsA_i \in \R^s$ is the $i$-th column vector of $A$. The main idea is  to construct QMC rules for which the matrix $X$ given in \eqref{QMC_matrix} has some structure such that QMC matrix-vector product $X \bsA_i$ can be computed very efficiently and the integration error of the underlying QMC rule has similar properties as for other QMC rules. Note that our approach works for any matrix $A$ as we do not use any structure of the matrix $A$.\\

To motivate the problem addressed in this paper, note that such computational problems arise naturally in certain settings. For instance, consider approximating the expected value
\begin{equation*}
\mathbb{E}(f) = \int_{\R^s} f(\bsy^\top) \frac{\exp\left( -\frac{1}{2} \bsy^\top \Sigma^{-1} \bsy \right)}{\sqrt{(2\pi)^s \det(\Sigma)}} \,\mathrm{d} \bsy,
\end{equation*}
where $\Sigma$ is symmetric and positive definite. Using the substitution $\bsy = A^\top \bsx$, where $A$ factorizes $\Sigma$, i.e. $\Sigma = A^\top A$, we arrive at the integral
\begin{equation*}
\mathbb{E}(f) = \int_{\R^s} f(\bsx^\top A) \underbrace{\frac{\exp\left(-\frac{1}{2} \bsx^\top \bsx \right)}{(2\pi)^{s/2} } \rd \bsx}_{=:\rd \mu(\bsx)}. 
\end{equation*}
Such problems arise for instance in statistics when computing expected values with respect to a normal distribution, and in mathematical finance, e.g., for pricing financial products whose payoff depends on a basket of assets. 

Another setting where such problems arise naturally comes from PDEs with random coefficients in the context of uncertainty quantification (see for instance \cite{KN16} for more details). Without stating all the details here, the main computational cost in this context comes from computing
\begin{equation}\label{PDE_example_comp}
D_k = \sum_{j=1}^s x_{j,k} C_j, \quad \mbox{for } k = 0, 1, \ldots, N-1,    
\end{equation}
where $C_j \in \R^{M \times M}$ are matrices (whose size depends on $s$ and $N$). Let $C_j = (c_{j,u,v})_{1\le u,v \le M}$ and define the column vectors $\bsc_{u,v} = (c_{1,u,v}, c_{2,u,v}, \ldots, c_{s,u,v})^\top \in \RR^s$, for $1 \le u, v \le M$. Then we can compute the matrices given by \eqref{PDE_example_comp} by computing
\begin{equation}\label{PDE_example_comp_fast}
X \bsc_{u,v}, \quad \mbox{for } 1 \le u,v \le M.
\end{equation}
In this approach we do not compute the matrices in \eqref{PDE_example_comp} for each $k$ separately, hence this approach requires us to store the results of \eqref{PDE_example_comp_fast} first.

\medskip

It was shown in \cite{DKLS15} that when using particular types of QMC rules, such as (polynomial) lattice rules or Korobov rules,
the cost to evaluate $Q_N(f)$, as given in \eqref{eq:QMC-rule}, can be reduced to only $\calO(\tau \,N \log N)$ operations provided that $\log N \ll s$.
This drastic reduction in computational cost is achieved by a fast matrix-matrix
multiplication exploiting the fact that for the chosen point sets the matrix $X$ can be re-ordered to be of circulant structure.
The fast multiplication is then realized by the use of the fast Fourier transformation (FFT).

Here, we will explore a different method which can also drastically reduce the computation cost of evaluating $Q_N(f)$, as given in \eqref{eq:QMC-rule}.
The reduction in computational complexity is achieved by using point sets which possess a certain repetitiveness in their components.
In particular, the number of different values of the components $x_{k,j}$ (for $0\le k \le N-1$) is in general smaller than $N$
and decreases when $j \in \{1,\ldots,s\}$ increases. As a particular type of such QMC point sets,
we will consider (polynomial) lattice point sets that have been obtained by the so-called reduced CBC construction as in \cite{DKLP15},
and we will also consider similarly reduced versions of digital nets obtained from digital sequences such as Sobol' or Niederreiter sequences.
The corresponding QMC point sets will henceforth be called reduced (polynomial) lattice point sets or reduced digital nets.

\medskip

The idea of our approach, which will be made more precise in the following sections, works as follows.

Assume that we have $N$ samples of the form $(x_{1,k}, x_{2,k}, \ldots, x_{s,k})$, $0 \le k \le N-1$.
We reduce the number of different values by choosing the number of samples differently for each coordinate,
say $N_j$ for the $j$-th coordinate, where $N_j$ divides $N_{j-1}$. E.g., if $N_1 = 4$, $N_2 = 2$, and $N_3 = 1$,
then we generate the points
\begin{equation}\label{ex}
(y_{1,0}, y_{2,0}, y_{3,0}), (y_{1,1}, y_{2,1}, y_{3,0}), (y_{1,2}, y_{2,0}, y_{3,0}), (y_{1,3}, y_{2,1}, y_{3,0}).
\end{equation}
Here, there are $4$ different values for the first coordinate, $2$ different values for the second
coordinate, and the values for the last coordinate are all the same.

What is the advantage of this construction? The advantage can be seen when we compute $\boldsymbol{x}^\top A$.
Let $\bsa_1, \bsa_2, \ldots, \bsa_s$ denote the rows of $A$. If all coordinates are different, we need $\mathcal{O}(Ns)$ operations.
For instance, in the example above we have $4$ points in the $3$-dimensional space, so we need to compute
$$
x_{1,k} \bsa_1 + x_{2,k} \bsa_2 + x_{3,k} \bsa_3, \quad\mbox{for}\quad k \in \{1, 2, 3, 4\}.
$$
However, if we use the points \eqref{ex} then we only need to compute
$$
y_{1,k} \bsa_1 + y_{2, \lfloor k/2 \rfloor} \bsa_2 + y_{3, 0} \bsa_3, \quad\mbox{for}\quad k \in \{1, 2, 3, 4\}.
$$
The last computation can be done recursively, by first computing $y_{3,0} \bsa_3$, then $y_{2,0} \bsa_2 + y_{3,0} \bsa_3$
and $y_{3,1} \bsa_2 + y_{3,0} \bsa_3$, and then finally the remaining vectors. By storing and reusing these intermediate results, we only compute $y_{3,0} \bsa_3$, and $y_{2,0} \bsa_2 + y_{3,0} \bsa_3$ and $y_{2,1} \bsa_2 + y_{3,0} \bsa_3$ once (rather than recomputing the same result as in the straightforward computation).

By applying this idea in the general case, we obtain a similar cost saving as for the fast QMC matrix-vector product in \cite{DKLS15}. However, the present method behaves differently in some situations which can be beneficial. One advantage is that it allows us to use random shifts, which is not possible for the fast QMC matrix-vector product.

\medskip

Before we proceed, we would like to introduce some notation. We will write $\ZZ$ to denote the set of integers, $\ZZ_{\ast}$ to denote the set of integers excluding 0, $\NN$ to denote the positive integers, and $\NN_0$ to denote 
the nonnegative integers. Furthermore, we write $[s]$ to denote the index set $\{1,\ldots,s\}$. 
To denote sets of components we use fraktur font, e.g., $\setu \subseteq [s]$.
 For a vector $\bsx=(x_1,\ldots,x_s)\in [0,1]^s$ and for $\setu \subseteq [s]$, we write $\bsx_\setu=(x_j)_{j \in \setu} \in
[0,1]^{|\setu|}$ and $(\bsx_{\setu},\bszero)\in [0,1]^s$ for the vector $(y_1,\ldots,y_s)$ with $y_j=x_j$ if $j \in \setu$ and $y_j=0$ if $j \not\in
\setu$. For integer vectors $\bsh\in\ZZ^s$, and $\setu\subseteq [s]$, we analogously write $\bsh_{\setu}$ to denote the projection of $\bsh$ onto those components with indices in $\setu$. 

The rest of the paper is structured as follows. Below we introduce lattice rules and polynomial lattice rules and the relevant function spaces. In Section~\ref{sec:reduced_convergence} we state the relevant results on the convergence of the reduced lattice rules. In Section~\ref{sec:fast_reduced_comp} we outline how to use reduced rules for computing matrix products efficiently. In Section~\ref{sec:reduced_nets} we discuss a version of the fast reduced QMC matrix-vector multiplication for digital nets and prove a bound on the weighted discrepancy. In Section~\ref{sec:reduced_MC} we explain how these ideas can also be applied to the plain Monte Carlo algorithm. Numerical experiments in Section~\ref{sec:num_exp} conclude the paper.

\subsection{Lattice point sets and polynomial lattice point sets}

In this section, we would like to give the definitions of the classes of QMC point sets considered in this paper.

\medskip

We start with (rank-1) lattice point sets. 
For further information, we refer to, e.g., \cite{DKP22,DKS13,N92,SJ94} and the references therein.

For a natural number $N \in \NN$ and a vector $\bsz \in \{1, 2, \ldots, N-1\}^s$, a lattice point set consists of
points $\bsx_0,\bsx_1,\ldots,\bsx_{N-1}$ of the form
\begin{equation*}
\bsx_k = \left\{ \frac{k}{N} \bsz \right\}\ \ \ \mbox{ for }\ \ k=0,1,\ldots,N-1.
\end{equation*}
Here, for real numbers $y \ge 0$ we write $\{y\} = y - \lfloor y \rfloor$ for the fractional part of $y$.
For vectors $\bsy$ we apply $\{\cdot \}$ component-wise.

In this paper, we assume that the number of points $N$ is a prime power, i.e., $N=b^m$, with prime $b$ and $m \in \NN$.

\medskip

The second class of point sets considered here are so-called polynomial lattice point sets, whose definition is similar to that of lattice point sets,
but based on arithmetic over finite fields instead of integer arithmetic. 
To introduce them, let $b$ again be a prime, and denote by $\FF_b$ the finite field with $b$ elements and by $\FF_b[x]$ the set of all polynomials in $x$ with coefficients in $\FF_b$.  We will use a special instance of polynomial lattice point sets over $\FF_b$. For a prime power $N=b^m$ and 
$\bsg=(g_1,\ldots,g_s)\in (\FF_b[x])^s$, 
a polynomial lattice point set consists of $N$ points $\bsx_0,\bsx_1,\ldots,\bsx_{N-1}$ of the form
$$
\bsx_k:=\left(\nu\left(\frac{k(x)\ g_1 (x)}{x^m}\right),\ldots,\nu\left(\frac{ k(x)\ g_s (x)}{x^m}\right)\right)
\ \ \ \mbox{ for }\ k \in \FF_b[x]\ \mbox{ with } \deg(k)<m,
$$
where for $f \in \FF_b[x]$, $f(x)=a_0+a_1 x+\cdots +a_r x^r$, with $\deg(f)=r$, the map $\nu$ is given by
$$
\nu\left(\frac{ f(x)}{x^m}\right):= \frac{a_{\min(r,m-1)}}{b^{m-\min(r,m-1)}}+\cdots+\frac{a_1}{b^{m-1}}+\frac{a_0}{b^m} \in [0,1).
$$
Note that $\nu( f(x)/x^m)=\nu((f(x)\pmod{x^m})/x^m)$. We refer to \cite[Chapter~10]{DP10} for further information on polynomial lattice point sets.

Lattice point sets are used in QMC rules referred to as lattice rules, and analogously for polynomial lattice point sets.

\subsection{Korobov spaces and related Sobolev spaces}\label{sec:spaces}

As pointed out above, lattice point sets are commonly used as node sets in lattice rules, and they are frequently studied in the context of numerical integration of 
functions in Korobov spaces and certain Sobolev spaces, which we would like to describe in the present section. Let us consider first a weighted Korobov space with general weights as studied in
\cite{DSWW06,NW10}. 

In several applications, we may have the situation that different groups of variables have different importance, and this can also
be reflected in the function spaces under consideration.
Indeed, the importance of the different components or groups of components of the functions in the Korobov space
to be defined is specified by a set of positive real numbers $\bsgamma=\{\gamma_{\setu}\}_{\setu \subseteq [s]}$, where we
may assume that $\gamma_{\emptyset}=1$. In this context, larger values of $\gamma_{\setu}$ indicate that the group of variables corresponding to the index set $\setu$ has relatively stronger influence on the computational problem, whereas smaller values of $\gamma_{\setu}$ mean the opposite.

The smoothness of the functions in the space is described by a parameter $\alpha>1/2$. 

Product weights are a common special case of the weights $\bsgamma$ where $\gamma_{\setu} = \prod_{j \in \setu} \gamma_j$ for $u \subseteq [s]$ and where $(\gamma_j)_{j=1, 2, \ldots, s}$ is a sequence of positive real numbers.

The weighted Korobov space, denoted by $\calH(K_{s,\alpha,\bsgamma})$, is a reproducing kernel Hilbert space with kernel function
\begin{eqnarray*}
	K_{s,\alpha,\bsgamma}(\bsx,\bsy)
	&=& 1+\sum_{\emptyset \not=\setu \subseteq [s]} \gamma_{\setu}
	\prod_{j \in \setu}\left(\sum_{h \in \ZZ_{\ast}} \frac{\exp(2 \pi \icomp h(x_j-y_j))}{|h|^{2\alpha}}\right)\\
	&=& 1+ \sum_{\emptyset \not=\setu \subseteq [s]} \gamma_{\setu} \sum_{\bsh_{\setu}\in \ZZ_{\ast}^{|\setu|}}
	\frac{\exp(2 \pi \icomp \bsh_{\setu}\cdot (\bsx_{\setu}-\bsy_{\setu}))}{\prod_{j \in \setu}\abs{h_j}^{2\alpha}}.
\end{eqnarray*}
The corresponding inner product is
\[
\langle f,g\rangle_{K_{s,\alpha,\bsgamma}}=\sum_{\setu \subseteq [s]}
\gamma_{\setu}^{-1} \sum_{\bsh_{\setu}\in \ZZ_{\ast}^{|\setu|}}
\left(\prod_{j \in \setu}\abs{h_j}^{2\alpha}\right) \widehat{f}((\bsh_{\setu},\bszero)) \overline{\widehat{g}((\bsh_{\setu},\bszero))},
\]
where $\widehat{f}(\bsh)=\int_{[0,1]^s} f(\bst) \exp(-2 \pi \icomp \bsh \cdot \bst)\rd \bst$ is the $\bsh$-th Fourier coefficient of $f$. For $\setu = \emptyset$, the empty sum is defined as $\widehat{f}(\bszero) \overline{\widehat{g}(\bszero)}$.

For $h \in \ZZ_{\ast}$, we define $\rho_{\alpha}(h)=|h|^{-2\alpha}$, and for $\bsh=(h_1,\ldots,h_s) \in \ZZ_{\ast}^s$ let
$\rho_{\alpha}(\bsh)=\prod_{j=1}^s \rho_{\alpha}(h_j)$.

It is known (see, e.g., \cite{DSWW06}) that the squared worst-case error of a lattice rule generated by a vector
$\bsz \in \ZZ^s$ in the weighted Korobov space $\calH(K_{s,\alpha,\bsgamma})$ is given by
\begin{equation}\label{eqerrorexprlps}
e_{N,s,\bsgamma}^2 (\bsz)=\sum_{\emptyset\neq\setu\subseteq [s]}\gamma_\setu
\sum_{\bsh_{\setu}\in\calD_\setu}\rho_\alpha (\bsh_\setu),
\end{equation}
where
$$
\calD_\setu = \calD_{\setu}(\bsz) :=\left\{\bsh_\setu\in\ZZ_{\ast}^{\abs{\setu}}\ : \ \bsh_\setu\cdot\bsz_\setu\equiv 0\ (\operatorname{mod}N) \right\}
$$
is called the dual lattice of the lattice generated by $\bsz$.

The worst-case error of lattice rules in a Korobov space can be related to the worst-case error in certain Sobolev spaces.
Indeed, consider a tensor product Sobolev space $\calH_{s,\bsgamma}^{\sob}$
of absolutely continuous functions whose mixed partial derivatives of order $1$
in each variable are square integrable, with norm (see \cite{H98})
$$
 \| f\|_{\calH_{s,\bsgamma}^{\sob}} = \left(\sum_{\setu \subseteq [s]}  \gamma_{\setu}^{-1} \int_{[0,1]^{|\setu|}} \left(\int_{[0,1]^{s-|\setu|}}
 \frac{\partial^{|\setu|}}{\partial \bsx_{\setu}} f(\bsx) \rd \bsx_{[s]\setminus \setu} \right)^2 \rd \bsx_{\setu}\right)^{1/2},
$$
where $\partial^{|\setu|}f/\partial \bsx_{\setu}$ denotes the mixed partial derivative with respect to all variables $j \in \setu$.
As pointed out in \cite[Section~5]{DKS13}, the root mean square worst-case error $\widehat{e}_{N,s,\bsgamma}$ for QMC integration in
$\calH_{s,\bsgamma}^{\sob}$ using randomly shifted lattice rules $(1/N)\sum_{k=0}^{N-1}f\left(\left\{ \frac{k}{N} \bsz+\bsDelta \right\}
\right)$, i.e.,
$$
 \widehat{e}_{N,s,\bsgamma}(\bsz)=\left(\int_{[0,1]^s} e_{N,s,\bsgamma}^2(\bsz,\bsDelta) \rd \bsDelta\right)^{1/2},
$$
where $e_{N,s,\bsgamma}(\bsz,\bsDelta)$ is the worst-case error of
QMC integration in $\calH_{s,\bsgamma}^{\sob}$ using a shifted integration lattice,
is essentially the same as the worst-case error $e_{N,s,\bsgamma}^{(1, \kor)}$ in the weighted Korobov space
$\calH(K_{s,1,\bsgamma})$ using the unshifted version of the lattice rules. In fact, we have
\begin{equation} \label{eq:wceeqwce}
\widehat{e}_{N,s, 2 \pi^2 \bsgamma}(\bsz)=e_{N,s,\bsgamma}^{(1, \kor)}(\bsz),
\end{equation}
where $2 \pi^2 \bsgamma$ denotes the weights $( (2 \pi^2)^{|\setu|} \gamma_{\setu})_{\emptyset \not=\setu \subseteq [s]}$.
For a connection to the so-called anchored Sobolev space see, e.g., \cite[Section~4]{HW00}.

In a slightly different setting, the random shift can be replaced by the tent transformation $\phi(x) = 1 - |1-2x|$ in each variable.
For a vector $\bsx \in [0,1]^s$ let $\phi(\bsx)$ be defined component-wise.
Let $\widetilde{e}_{N,s, \bsgamma}(\bsz)$ be the worst-case error in the unanchored weighted Sobolev space $\calH_{s, \bsgamma}^{\sob}$
using the QMC rule $(1/N)\sum_{k=0}^{N-1}f\left(\phi\left(\left\{ \frac{k}{N} \bsz \right\} \right) \right)$.
Then it is known due to \cite{DNP14} and \cite{CKNS16} that
\begin{equation}\label{eq_wce_tent}
\widetilde{e}_{N,s, \pi^2 \bsgamma}(\bsz) \le e_{N,s,\bsgamma}^{(1, \kor)}(\bsz),
\end{equation}
where $\pi^2 \bsgamma = (\pi^{2|\setu|} \gamma_{\setu})_{\emptyset \neq \setu \subseteq [s]}$, and that the CBC construction
with the quality criterion given by the worst-case error in the Korobov space $\calH(K_{s,1,\bsgamma})$ can be used to construct tent-transformed
lattice rules which achieve the almost optimal convergence order in the space $\calH_{s,\pi^2 \bsgamma}^{\sob}$ under appropriate conditions
on the weights $\bsgamma$ (see \cite[Corollary 1]{CKNS16}). Hence we also have a direct connection between
integration in the Korobov space using lattice rules and integration in the unanchored Sobolev space using tent-transformed lattice rules.

Thus, results shown for the integration error in the Korobov space can, by a few simple modifications, be carried over
to results that hold for anchored and unanchored Sobolev spaces, respectively, by using Equations~(\ref{eq:wceeqwce}) and~\eqref{eq_wce_tent}.


\subsection{Reduced (polynomial) lattice point sets}\label{sec:reduced_convergence}

In \cite{DKLP15}, the authors introduced so-called reduced lattice point sets and reduced polynomial lattice point sets. The original motivation for
these concepts was to make search algorithms for excellent QMC rules faster for situations where the dependence of a high-dimensional integration
problem on its variable $j$ decreases fast as the index $j$ increases. 
Such a situation might occur in various applications and is modelled by assuming that the weights in the weighted spaces, such as those introduced in
Section \ref{sec:spaces}, decay at a certain speed.

The ``reduction'' in the search for good lattice point sets is achieved by
shrinking the sizes of the sets that the different components of the generating vector $\bsz$ are chosen from. In the present paper, we will
make use of the same idea, but with a different aim, namely that of increasing the speed of computing the matrix product $XA$, as outlined above.

Recall that we assume $N$ to be a prime power, $N=b^m$. A reduced rank-1 lattice point set is obtained by introducing
an integer sequence $\bsw = (w_j)_{j=1}^s \in \N_0^s$ with $0=w_1\le w_2 \le \cdots \le w_s$.
We will refer to the integers $w_j$ as reduction indices. Additionally, for integer $w\ge 0$, we introduce the set
\begin{equation*}
	\mathbb{U}_{b^{m-w}}
	:=
	\begin{cases}
		\{ z \in \{1,2,\ldots,b^{m-w}-1\} : \gcd(z,b)=1 \} & \mbox{if $w<m$,}\\
		\{1\} & \mbox{if $w\ge m$.}
	\end{cases}
\end{equation*}
Note that $\mathbb{U}_{b^{m-w}}$ is the group of units of integers modulo $b^{m-w}$ for $w<m$, and in this case the
cardinality of the set $\mathbb{U}_{b^{m-w}}$ equals $(b-1) b^{m-w-1}$.
For the given sequence $\bsw$ we then define $s^*$ as $s^*:=\max\{j\in\N\colon w_j<m\}$.

The generating vector $\bsz \in \Z^s$ of a reduced lattice rule as in \cite{DKLP15} is then of the form
\begin{equation*}
 \bsz = (b^{w_1} z_1, b^{w_2} z_2,\ldots,b^{w_s} z_s) = (z_1, b^{w_2} z_2,\ldots,b^{w_s} z_s),
\end{equation*}
where $z_j \in \mathbb{U}_{b^{m-w_j}}$ for all $j=1,\ldots,s$. Note that for $j> s^*$ we have $w_j\ge m$ and $z_j=1$.
In this case the corresponding components of $\bsz$ are multiples of $N$. The resulting $b^m$
points of the reduced lattice point set are given by
\begin{eqnarray*}
  \bsx_k &=&
  \left( \left\{ \frac{k z_1 b^{w_1}}{N}\right\}, \left\{ \frac{k z_2 b^{w_2}}{N}\right\}, \ldots, \left\{ \frac{k z_s b^{w_s}}{N} \right\} \right)\\
  &=&\left( \frac{k z_1 \bmod b^{m-\min (w_1,m)}}{b^{m-\min (w_1,m)}},
  \frac{k z_2 \bmod b^{m-\min (w_2,m)}}{b^{m-\min (w_2,m)}}, \ldots, \frac{k z_s \bmod b^{m-\min (w_s,m)}}{b^{m-\min (w_s,m)}} \right)
\end{eqnarray*}
with $k=0,1,\ldots,N-1$. Therefore it is obvious that the components $x_{k,j}$, $k=0,1,\ldots,N-1$ belong to the set
$\{0,1/b^{m-\min(w_j,m)},\ldots,(b^{m-\min(w_j,m)}-1)/b^{m-\min(w_j,m)}\}$ and each of the values is attained exactly
$b^{\min(w_j,m)}$ times for all $j=1,\ldots,s$. In particular, all $x_{k,j}$ equal 0 for $j>s^*$.

Regarding the performance of reduced lattice rules for numerical integration in the Korobov space $\calH(K_{s,\alpha,\bsgamma})$, the following
result was shown in \cite{DKLP15}. For a proof of this result and further background information, we refer to the original paper \cite{DKLP15}.
\begin{theorem}\label{thm:wce_red_lattice}
Let $\bsw=(w_j)_{j=1}^s \in\NN_0^s$ be a sequence of reduction indices, let $\alpha>1/2$, and consider the Korobov space $\calH(K_{s,\alpha,\bsgamma})$.
Using a computer search algorithm, one can construct a generating vector $\bsz=(z_1 b^{w_1},\ldots,z_s b^{w_s})\in\ZZ^s$ such that,
for any $d\in [s]$ and any $\lambda \in(1/(2\alpha),1]$, the following estimate on the squared worst-case error
of integration in $\calH(K_{d,\alpha,\bsgamma})$ holds.
$$
e_{N,s,\bsgamma}^2 ((z_1 b^{w_1},\ldots,z_d b^{w_d}))\le \left(\sum_{\emptyset\neq\setu\subseteq [d]}\gamma_\setu^\lambda
\frac{2(2\zeta (2\alpha\lambda))^{\abs{\setu}}}{b^{\max\{0,m-\max_{j\in \setu} w_j\}}}\right)^{\frac{1}{\lambda}}.
$$
\end{theorem}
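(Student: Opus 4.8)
The plan is to run a component-by-component (CBC) argument on a linearised version of the worst-case error and to reduce the entire analysis to a single averaging estimate over the last coordinate. I start from the error expression \eqref{eqerrorexprlps}, namely $e_{N,s,\bsgamma}^2(\bsz)=\sum_{\emptyset\neq\setu\subseteq[s]}\gamma_\setu\sum_{\bsh_\setu\in\calD_\setu}\rho_\alpha(\bsh_\setu)$. Since $\lambda\in(1/(2\alpha),1]$, the elementary inequality $(\sum_i a_i)^{\lambda}\le\sum_i a_i^{\lambda}$ for nonnegative $a_i$, applied twice, yields
\[
\big(e_{N,s,\bsgamma}^2(\bsz)\big)^{\lambda}\le\sum_{\emptyset\neq\setu\subseteq[s]}\gamma_\setu^{\lambda}\sum_{\bsh_\setu\in\calD_\setu}\rho_{\alpha\lambda}(\bsh_\setu)=:\widetilde E_s,
\]
where I used $\rho_\alpha(\bsh_\setu)^{\lambda}=\rho_{\alpha\lambda}(\bsh_\setu)$. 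It therefore suffices to prove $\widetilde E_d\le\sum_{\emptyset\neq\setu\subseteq[d]}\gamma_\setu^{\lambda}\,2(2\zeta(2\alpha\lambda))^{|\setu|}\,b^{-\max\{0,m-\max_{j\in\setu}w_j\}}$ for every $d$ and to take $\lambda$-th roots at the very end. The assumption $\lambda>1/(2\alpha)$ is exactly what makes $\zeta(2\alpha\lambda)$, and more importantly the valuation sums appearing below, finite.

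Next I set up the induction on $d$, carrying the bound on $\widetilde E_d$ itself rather than on $e^2$. Writing $\widetilde E_d=\widetilde E_{d-1}+\theta_d$ with $\theta_d=\sum_{\setu\ni d,\,\setu\subseteq[d]}\gamma_\setu^{\lambda}\sum_{\bsh_\setu\in\calD_\setu}\rho_{\alpha\lambda}(\bsh_\setu)$, only $\theta_d$ depends on the newly added component $z_d$. The CBC rule selects $z_d\in\mathbb{U}_{b^{m-w_d}}$ minimising $\theta_d$, so that its value at the chosen $z_d$ is at most the average over $\mathbb{U}_{b^{m-w_d}}$; carrying the bound on the linearised quantity $\widetilde E_d$ through the induction keeps this self-consistent, and the usual remark that minimising $e^2$ works simultaneously for all admissible $\lambda$ applies. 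The decisive structural simplification is that the reduction indices are nondecreasing: for every $\setu\subseteq[d]$ with $d\in\setu$ one has $\max_{j\in\setu}w_j=w_d$, so all terms in $\theta_d$ carry the same reduction exponent and the target increment is $b^{-\max\{0,m-w_d\}}\sum_{\setu\ni d}\gamma_\setu^{\lambda}\,2(2\zeta(2\alpha\lambda))^{|\setu|}$.

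The heart of the proof, and the step I expect to be the main obstacle, is the averaging estimate for a fixed $\setu\ni d$ with $w_d<m$, namely
\[
\frac{1}{|\mathbb{U}_{b^{m-w_d}}|}\sum_{z_d\in\mathbb{U}_{b^{m-w_d}}}\ \sum_{\bsh_\setu\in\calD_\setu}\rho_{\alpha\lambda}(\bsh_\setu)\ \le\ \frac{2(2\zeta(2\alpha\lambda))^{|\setu|}}{b^{m-w_d}}.
\]
Setting $\setv=\setu\setminus\{d\}$ and $\bsh_\setu=(\bsh_\setv,h_d)$, I would interchange the order of summation and, for fixed $\bsh_\setu$ and fixed earlier components $\bsz_\setv$, count the units $z_d$ solving $h_dz_db^{w_d}\equiv-\bsh_\setv\cdot\bsz_\setv\pmod{b^m}$. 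Writing $h_d=b^{a}h_d'$ with $\gcd(h_d',b)=1$, this congruence fixes $z_d$ modulo $b^{m-w_d-a}$ and is solvable only if $b^{a+w_d}$ divides $\bsh_\setv\cdot\bsz_\setv$, which simultaneously bounds the number of admissible $z_d$ by roughly $b^{a}/b^{m-w_d}$ and restricts the compatible $\bsh_\setv$. In the resulting double sum the geometric series over the valuation $a$ converges precisely because $2\alpha\lambda>1$, the $\bsh_\setv$-part contributes $(2\zeta(2\alpha\lambda))^{|\setv|}$ and the $h_d$-part the remaining $2\zeta(2\alpha\lambda)$; the delicate point is to keep the solvability constraint coupled to the valuation so that the constant stays bounded as $\lambda\downarrow1/(2\alpha)$ and collapses to the clean factor $2$. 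An attractive alternative that avoids the worst bookkeeping is to exploit that each coordinate $j$ of the reduced point set takes only $b^{m-w_j}$ distinct values; the projection onto $\setu$ is then a repeated honest lattice rule modulo $b^{m-w_d}$, and one may import the classical CBC bound for ordinary lattice rules with $b^{m-w_d}$ points to obtain the displayed estimate directly.

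Finally I combine the pieces. Summing the averaging estimate over all $\setu\ni d$ bounds the average of $\theta_d$, hence, by minimum $\le$ average, its value at the chosen $z_d$, by the target increment; adding the inductive bound on $\widetilde E_{d-1}$ closes the induction. The coordinates with $w_d\ge m$ need no averaging: there $z_d=1$ is forced, the $d$-th coordinate is identically $0$, the index $h_d$ decouples in the congruence, and the trivial bound $\sum_{\bsh_\setu\in\calD_\setu}\rho_{\alpha\lambda}(\bsh_\setu)\le(2\zeta(2\alpha\lambda))^{|\setu|}=b^{-\max\{0,m-w_d\}}(2\zeta(2\alpha\lambda))^{|\setu|}$ already yields the claim with a factor of $2$ to spare. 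Taking $\lambda$-th roots in the bound on $\widetilde E_d$ then gives the stated estimate on $e_{N,s,\bsgamma}^2((z_1b^{w_1},\ldots,z_db^{w_d}))$ for every $d$, as required.
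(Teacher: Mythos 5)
A preliminary remark: the paper itself does not prove Theorem~\ref{thm:wce_red_lattice}; it explicitly defers to \cite{DKLP15}. Your overall strategy --- Jensen linearization $(\sum a_i)^\lambda\le\sum a_i^\lambda$, a CBC induction over coordinates, an averaging argument over $z_d\in\mathbb{U}_{b^{m-w_d}}$, and the use of monotonicity of the $w_j$ so that $\max_{j\in\setu}w_j=w_d$ for every $\setu\ni d$ --- is indeed the strategy of that reference, and your handling of the degenerate coordinates $w_d\ge m$ is correct. However, there are two genuine gaps. First, the averaging estimate, which you yourself call the heart of the proof, is asserted rather than proven. Your counting sketch is on the right track (write $h_d=b^ah_d'$ with $\gcd(h_d',b)=1$; solvability forces $b^{a+w_d}\mid\bsh_\setv\cdot\bsz_\setv$; at most $b^a$ unit solutions $z_d$), and you correctly flag that decoupling the geometric series in $a$ from the constraint on $\bsh_\setv$ yields a constant of order $(1-b^{1-2\alpha\lambda})^{-1}$, which blows up as $\lambda\downarrow 1/(2\alpha)$ instead of collapsing to the factor $2$. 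But the resolution of exactly this point (for each fixed $\bsh_\setv$ only the single valuation $a=v_b(\bsh_\setv\cdot\bsz_\setv)-w_d$ admits unit solutions, plus the separate range $a+w_d\ge m$, with $b/(b-1)\le 2$ producing the clean constant) is the substance of the lemma, and it is missing. Worse, the alternative you propose to avoid this bookkeeping is invalid: the projection of the reduced point set onto $\setu$ is not a repeated \emph{ordinary} lattice rule; it is a repeated \emph{reduced} lattice rule with $b^{m-\min_{j\in\setu}w_j}$ points, whose generating-vector components $z_jb^{\,w_j-\min_{i\in\setu}w_i}$ are still non-units unless all $w_j$, $j\in\setu$, coincide. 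Importing the classical CBC averaging bound is therefore circular.

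The second gap concerns $\lambda$-uniformity. The theorem claims that a \emph{single} constructed vector satisfies the bound for \emph{all} $\lambda\in(1/(2\alpha),1]$ and all $d\in[s]$. In your primary formulation $z_d$ minimizes the linearized increment $\theta_d$, which depends on $\lambda$; the resulting vector is then $\lambda$-dependent and the stated theorem does not follow. If instead $z_d$ minimizes $e_{N,d,\bsgamma}^2$ (the $\lambda$-free criterion the algorithm actually uses), your induction on the linearized quantity $\widetilde E_d$ breaks down, because the minimizer of $e^2$ need not satisfy $\theta_d(z_d^\ast)\le\frac{1}{|\mathbb{U}_{b^{m-w_d}}|}\sum_{z_d}\theta_d(z_d)$. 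The standard repair, which is what \cite{DKLP15} does, is to carry the induction on $\bigl(e_{N,d,\bsgamma}^2\bigr)^\lambda$ itself: apply Jensen pointwise in $z_d$ to get $\bigl(e_d^2(z_d)\bigr)^\lambda\le\bigl(e_{d-1}^2\bigr)^\lambda+\theta_d(z_d)$, and then use $\min_{z_d}\bigl(e_d^2(z_d)\bigr)^\lambda\le\frac{1}{|\mathbb{U}_{b^{m-w_d}}|}\sum_{z_d}\bigl(e_d^2(z_d)\bigr)^\lambda$ before averaging the increment. Your appeal to ``the usual remark'' gestures at this, but the proof as written conflates the two selection criteria and establishes neither version cleanly; the restructured induction, together with the completed averaging lemma, is what is needed.
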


Let us briefly illustrate the motivation for introducing the numbers $w_1, w_2, \ldots, w_s$. Assume we have product weights $\gamma_1 \ge  \gamma_2 \ge  \cdots \ge \gamma_s > 0$. We have
\begin{align*}
\sum_{\emptyset\neq\setu\subseteq [d]}\gamma_\setu^\lambda
\frac{2(2\zeta (2\alpha\lambda))^{\abs{\setu}}}{b^{\max\{0,m-\max_{j\in \setu} w_j\}}} \le & b^{-m} \left(  - 1 + 2 \prod_{j = 1}^d \left(1 + \gamma_j^\lambda 2 \zeta(2\alpha \lambda) b^{\min\{m, w_j \} } \right) \right).
\end{align*}
Further assume that we want to have a bound independent of the dimension. In the non-reduced (classical) case we have $w_1 = w_2 = \cdots = w_s = 0$ and hence
\begin{equation*}
\prod_{j=1}^d \left(1 + \gamma_j^\lambda 2 \zeta(2 \alpha \lambda) \right) = \exp\left(\sum_{j=1}^d \log \left(1 + \gamma_j^\lambda 2 \zeta(2\alpha \lambda) \right) \right) \le \exp\left(2 \zeta(2 \alpha \lambda) \sum_{j=1}^d \gamma_j^\lambda \right),
\end{equation*}
where we used $\log (1+z) \le z$ for $z \ge 0$. If $\sum_{j=1}^\infty \gamma_j^\lambda < \infty$, we get a bound which is independent of the dimension $d$. 

For illustration, say $\gamma_j^{1/(2\alpha)} = j^{-4}$, then the infinite sum is finite and we get a bound independent of the dimension. However, a significantly slower converging sequence would still be enough to give us a bound independent of the dimension. So if we introduce $w_1 \le w_2 \le \cdots$, where $w_j = \log_b j^2$ for instance, then we still have
\begin{equation*}
\sum_{j=1}^\infty \gamma_j^\lambda b^{w_j} < \infty.
\end{equation*}
In \cite{DKLP15} we have shown how the $w_j$ can be used to reduce the construction cost of the CBC construction by reducing the  
size of the search space from $b^m$ to $b^{\max\{0, m-w_j\}}$ in component $j$. In this paper we show that the $w_j$ can also be used to reduce the computation cost of computing $X A$, where the rows of $X$ are the lattice points of a reduced lattice rule. The speed-up which can be achieved this way will depend on the weights $\{\gamma_{\setu}\}_{\setu \subseteq [s]}$ (and the $w_1 \ge w_2 \ge \cdots$). This is different from the fast QMC matrix-vector product in \cite{DKLS15}, which works independently of the weights and does not influence tractability properties.

It is natural to expect that one can use an analogous approach for polynomial lattice rules leading to similar results.

\section{The fast reduced matrix product computation}\label{sec:fast_reduced_comp}

\subsection{The basic algorithm}\label{sec:basic_alg}

We first present some observations which lead us to an efficient algorithm for computing $XA$.

Let $X = [\bsx_0^\top, \bsx_1^\top, \ldots, \bsx_{N-1}^\top]^\top$ be the $N\times s$-matrix whose $k$-th row is the $k$-th point of the reduced lattice point set (written as a row vector). Let $\boldsymbol{\xi}_j$ denote the $j$-th column of $X$, i.e. 
$X = [\boldsymbol{\xi}_1, \boldsymbol{\xi}_2, \ldots, \boldsymbol{\xi}_s]$. Let $A = [\bsa_1, \bsa_2, \ldots, \bsa_s]^\top$, 
where $\bsa_j \in \R^{1 \times \tau}$ is the $j$-th row of $A$. Then we have
\begin{equation}\label{Eq_XA_product}
X A = [\boldsymbol{\xi}_1, \boldsymbol{\xi}_2, \ldots, \boldsymbol{\xi}_s]  \begin{pmatrix} \bsa_1 \\ \bsa_2 \\ \vdots \\ \bsa_s \end{pmatrix} 
= \boldsymbol{\xi}_1 \bsa_1 + \boldsymbol{\xi}_2 \bsa_2 + \cdots + \boldsymbol{\xi}_{s} \bsa_{s}.
\end{equation}

In order to illustrate the inherent repetitiveness of a reduced lattice point set, consider a reduction index $0 < w_j < m$
and the corresponding component $z_j$ of the generating vector.  The $j$-th component of the $N=b^m$ points of the reduced lattice point set 
(i.e., the $j$-th column $\boldsymbol{\xi}_j$ of $X$) is then given by
\begin{eqnarray*}
\boldsymbol{\xi}_j & := &	\left( \frac{0\cdot z_j \bmod b^{m-w_j}}{b^{m-w_j}}, \frac{1 \cdot z_j \bmod b^{m-w_j}}{b^{m-w_j}}, 
	\ldots, \frac{(b^m-1)\cdot z_j \bmod b^{m-w_j}}{b^{m-w_j}} \right)^\top
	\\
	&=&  \underbrace{\left(X_j,\ldots,X_j \right)^\top}_{b^{w_j} \text{ times}}, 
\end{eqnarray*}
where 
$$
X_j = \left( 0, \frac{z_j \bmod b^{m-w_j}}{b^{m-w_j}}, \ldots, \frac{(b^{m-w_j}-1) z_j \bmod b^{m-w_j}}{b^{m-w_j}} \right)^{\top}.
$$
We will exploit this repetitive structure within the reduced lattice points to derive a fast matrix-vector multiplication algorithm.

\medskip

Based on the above observations, it is possible to formulate the following algorithm to compute \eqref{Eq_XA_product} in an efficient way.
Note that for $j>s^*$ the $j$-th column of
$X$ consists only of zeros, so there is nothing to compute for the entries of $X$ corresponding to these columns.

\begin{algorithm}[H]
\caption{Fast reduced matrix product}
\label{alg:fast-mv-prod}
\vspace{5pt}
\textbf{Input:} Matrix $A \in \R^{s \times \tau}$, integer $m \in \N$, prime $b$, reduction indices $0=w_1\le w_2 \le \cdots \le w_s$, 
corresponding generating vector of reduced lattice rule, $\bsz=(z_1, b^{w_2} z_2,\ldots,b^{w_s} z_s)$. \\
\vspace{-10pt}
\begin{algorithmic}
	\STATE Set $N=b^m$ and set $P_{s^*+1} = \bszero_{1 \times \tau} \in \R^{1 \times \tau}$.
	\FOR{$j=s^*$ {\bf to} $1$}
	\STATE $\bullet$ Compute the $b^{m-w_j}$ reduced lattice points
	\begin{equation*}
	X_j = \left( 0, \frac{z_j \bmod b^{m-w_j}}{b^{m-w_j}}, \ldots, \frac{(b^{m-w_j}-1) z_j
	\bmod b^{m-w_j}}{b^{m-w_j}} \right)^{\top} \in \R^{ b^{m-w_j} \times 1} .
	\end{equation*}
	\vspace{-5pt}
	\STATE $\bullet$ Compute $P_j$ as
	\begin{equation*}
	P_j =
	\rotatebox[origin=c]{90}{$b^{\min(w_{j+1},m)-w_j}$ times}\left\{
	\begin{pmatrix}
	P_{j+1} \\
	P_{j+1} \\
	\vdots \\
	P_{j+1} \\
	\end{pmatrix}
	\right.
	+
	X_j  \bsa_j \in \R^{b^{m-w_j} \times \tau},
	\end{equation*}
	\hspace{5pt} where $\bsa_j \in \R^{1 \times \tau}$ denotes the $j$-th row of the matrix $A$.
	\ENDFOR
	\STATE Set $P = P_1$.
\end{algorithmic}
\vspace{5pt}
\textbf{Return:} Matrix product $P = X A$.
\end{algorithm}

The following theorem gives an estimate of the computational cost of Algorithm \ref{alg:fast-mv-prod}, which shows that
by using a reduced point set we can obtain an improved computation time over that in \cite{DKLS15}, which only depends on
the index $s^*$, but not on $s$ anymore.
\begin{theorem}
Let a matrix $A \in \R^{s \times \tau}$, an integer $m \in \N$, a prime $b$,
and reduction indices $0=w_1 \le w_2 \le \cdots \le w_s$ be given. Furthermore,
let $\bsz=(z_1, b^{w_2} z_2,\ldots,b^{w_s} z_s)$ be the generating vector of a 
reduced lattice rule corresponding to $N=b^m$ and the given reduction indices $(w_j)_{j=1}^s$.
Then the matrix product $P=X A$ can be computed via Algorithm \ref{alg:fast-mv-prod} using
\begin{equation*}
\calO\left( \tau \, N \sum_{j=1}^{s^*} b^{-w_j} \right)
\end{equation*}
operations and requiring $\calO(N \tau )$ storage.
Here, $X$ is the $N \times s$-matrix whose rows are the $N$ reduced lattice points.
\end{theorem}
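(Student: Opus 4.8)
The plan is to establish two things about Algorithm~\ref{alg:fast-mv-prod}: first that it returns exactly $XA$, and second that its running time and storage match the stated bounds. Correctness is the real content; the cost estimate then follows by simply summing the per-iteration work.

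For correctness I would introduce the partial sums $S_j := \sum_{i=j}^{s^*} \boldsymbol{\xi}_i \bsa_i \in \R^{N\times\tau}$, so that by~\eqref{Eq_XA_product}, and because $\boldsymbol{\xi}_i=\bszero$ for $i>s^*$, we have $S_1=XA$. The key structural fact is that the $k$-th entry of $\boldsymbol{\xi}_i$ equals $(kz_i \bmod b^{m-w_i})/b^{m-w_i}$, which depends only on $k \bmod b^{m-w_i}$; hence each column $\boldsymbol{\xi}_i$ is $b^{m-w_i}$-periodic in the row index. Since $w_i\ge w_j$ for $i\ge j$, every term of $S_j$ is $b^{m-w_j}$-periodic, and therefore so is $S_j$, which is thus completely determined by its first $b^{m-w_j}$ rows. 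I would then prove, by downward induction on $j$ from $s^*$ to $1$, the invariant that $P_j$ equals precisely these first $b^{m-w_j}$ rows of $S_j$. The induction step uses $S_j=S_{j+1}+\boldsymbol{\xi}_j\bsa_j$: restricting to the leading $b^{m-w_j}$ rows, the first summand contributes the first $b^{m-w_j}$ rows of $S_{j+1}$, which by $b^{m-w_{j+1}}$-periodicity are exactly $b^{\min(w_{j+1},m)-w_j}$ stacked copies of $P_{j+1}$, while the second summand contributes the leading $b^{m-w_j}$ entries of $\boldsymbol{\xi}_j$, namely $X_j$, multiplied by $\bsa_j$. This is precisely the update defining $P_j$. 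The base case $j=s^*$ uses $S_{s^*+1}=\bszero$ together with the initialization $P_{s^*+1}=\bszero_{1\times\tau}$, the stacking factor being $b^{m-w_{s^*}}$; stacking a zero row any number of times gives zeros, so the invariant holds automatically there. Taking $j=1$, where $w_1=0$ forces $b^{m-w_1}=N$ and hence no compression, yields $P_1=S_1=XA$.

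For the operation count I would bound the work in iteration $j$. Forming $X_j$ costs $\calO(b^{m-w_j})$ (one modular product per entry, or one incremental modular addition each); the rank-one product $X_j\bsa_j$ costs $b^{m-w_j}\tau$ multiplications; and adding the stacked copies of $P_{j+1}$ costs a further $b^{m-w_j}\tau$ additions, since $P_j$ has $b^{m-w_j}\tau$ entries. Thus iteration $j$ runs in $\calO(\tau\,b^{m-w_j})$ operations, and summing over $j=1,\ldots,s^*$ gives
\begin{equation*}
\calO\!\left(\tau \sum_{j=1}^{s^*} b^{m-w_j}\right) = \calO\!\left(\tau\, N \sum_{j=1}^{s^*} b^{-w_j}\right),
\end{equation*}
as claimed. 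For storage, each $P_j\in\R^{b^{m-w_j}\times\tau}$ has at most $N\tau$ entries, the maximum $b^{m-w_1}=N$ being attained at $j=1$; since the recursion needs only $P_{j+1}$ to produce $P_j$, a single working array of size $\calO(N\tau)$ suffices.

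The main obstacle is the correctness argument, specifically the periodicity-and-stacking bookkeeping: one must verify that truncating $S_{j+1}$ to its first $b^{m-w_j}$ rows yields exactly $b^{\min(w_{j+1},m)-w_j}$ verbatim copies of the shorter array $P_{j+1}$, and that the leading $b^{m-w_j}$ entries of $\boldsymbol{\xi}_j$ coincide with $X_j$. Once these two tiling facts are confirmed, together with the boundary behaviour at $j=s^*$, the induction closes and the cost and storage sums are routine.
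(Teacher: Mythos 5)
Your proposal is correct, and on the part the paper actually proves it coincides with the paper's argument: the paper's proof likewise bounds step $j$ by the $\calO(b^{m-w_j})$ cost of generating $X_j$ plus the dominant $\calO(b^{m-w_j}\,\tau)$ cost of the product $X_j\bsa_j$, sums over $j=1,\ldots,s^*$ to get $\calO\left(\tau\, N \sum_{j=1}^{s^*} b^{-w_j}\right)$, and notes that the $P_j$ can be overwritten so that $\calO(N\tau)$ storage suffices. Where you go beyond the paper is the correctness claim: the paper's proof simply takes for granted that Algorithm~\ref{alg:fast-mv-prod} outputs $XA$, this being justified only by the informal discussion of the repetitive structure of the columns $\boldsymbol{\xi}_j$ preceding the algorithm. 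Your downward induction with the invariant that $P_j$ equals the first $b^{m-w_j}$ rows of $S_j=\sum_{i\ge j}\boldsymbol{\xi}_i\bsa_i$, resting on the observation that each $\boldsymbol{\xi}_i$ is $b^{m-w_i}$-periodic in the row index (so that every $S_j$ is $b^{m-w_j}$-periodic and is determined by its leading block), is a rigorous version of that prose; it also handles the two delicate boundary cases correctly, namely $j=s^*$, where the stacked object is the single zero row $P_{s^*+1}$, and $j=1$, where $w_1=0$ means the leading block is all of $S_1=XA$. In short: identical cost analysis, plus a correctness argument that the paper delegates to the surrounding discussion rather than to the proof itself.
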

\begin{proof}
In the $j$-th step the generation of the $b^{m-w_j}$ lattice points requires $\calO(b^{m-w_j})$ operations and storage. The most costly
operation in each step is the product
$X_j  \bsa_j$ which requires $\calO(b^{m-w_j} \, \tau  )$ operations, but this step only needs to be carried out for those $j$ with
$j\le s^*$. Summing over all $j=1,\ldots,s^*$, the computational complexity amounts to
\begin{equation*}
\calO\left( \sum_{j=1}^{s^*} b^{m-w_j} \, \tau \right) = \calO\left(\tau  \, b^m \sum_{j=1}^{s^*} b^{-w_j} \right)
\end{equation*}
operations. Furthermore, storing the matrix $P_j$ requires $\calO(b^{m-w_j} \, \tau )$ space, which attains a maximum of $\calO(b^{m} \, \tau )$
for $w_1=0$. Note that in an efficient implementation the matrices $P_j$ are overwritten in each step and do not all have to be stored.
\end{proof}

In the next section we discuss the fast reduced QMC matrix-vector product where the number of points is a power of 2.

\subsection{An optimized algorithm}

Recall that, for a sequence $\bsw$ of reduction indices, we have $s^*=\max\{j\in\N\colon w_j<m\}$. 
Since for $j>s^*$ the $j$-th column of $X$ consists only of zeros, we can restrict our considerations in this section to
the product $\widetilde{X} \widetilde{A}$, where $\widetilde{X}$ is an $N \times s^*$-matrix, and $\widetilde{A}$ is an $s^* \times \tau$-matrix.

Assume that $0 = w_1 \le w_2 \le \cdots \le w_{s^*} <  m$ and define, for $I \in \{0,1,\ldots,m-1\}$, the quantity
$$
\tau_I :=\# \{ j \in \{1,\ldots,s^*\} \mid w_j=I \},
$$
which denotes the number of $w_j$ which equal $I$. Obviously, we then have that $\sum_{I=0}^{m-1} \tau_I = s^*$.

Consider then the following alternative fast reduced matrix product algorithm.
\begin{algorithm}[H]
	\caption{Optimized fast reduced matrix product}
	\label{alg:fast-mv-prod-alt}
	\vspace{5pt}
	\textbf{Input:} Matrix $\widetilde{A} \in \R^{s^* \times \tau}$, integer $m \in \N$, prime $b$, reduction indices
	$0=w_1\le w_2 \le \cdots \le w_{s^*} < m$, 
	the corresponding generating vector of a reduced lattice rule, $\bsz=(z_1, b^{w_2} z_2,\ldots,b^{w_{s^*}} z_{s^*})$. \\
	\vspace{-10pt}
	\begin{algorithmic}
		\STATE Set $N=b^m$, set $\widetilde{P}_{m} = \bszero_{1 \times \tau} \in \R^{1 \times \tau}$, and $w_{s^*+1}=m$.
		\FOR{$I=m-1$ {\bf to} $0$}
		\STATE $\bullet$ Compute the matrix
		\begin{equation*}
		\widetilde{X}_I = (W_1^I,\ldots,W^I_{\tau_I}) \in \R^{b^{m-I} \times \tau_I},
		\end{equation*}
		whose columns are the reduced lattice points
		\begin{equation*}
		W_{r}^I = \left( 0, \frac{z_{j_r} \bmod b^{m-I}}{b^{m-I}}, \ldots,
		\frac{(b^{m-I}-1) z_{j_r} \bmod b^{m-I}}{b^{m-I}} \right)^\top \in \R^{b^{m-I} \times 1},\ 1\le r\le \tau_I,
		\end{equation*}
		and where the $j_r$, $1\le r\le \tau_I$, are those indices for which $w_{j_r}=I$. 
		If $\tau_I = 0$, then set $\widetilde{X}_I = \bszero_{b^{m-I}\times \tau_I}$.
		\STATE $\bullet$ Compute $\widetilde{P}_{I}$ as \begin{equation*}
		\widetilde{P_{I}}=\rotatebox[origin=c]{90}{$b$ times}\left\{
		\begin{pmatrix}
		\widetilde{P}_{I+1} \\
		\vdots \\
		\widetilde{P}_{I+1} \\
		\end{pmatrix}
		\right.
		+  \widetilde{X}_I  \widetilde{A}_I \in \R^{b^{m-I} \times \tau},
		\end{equation*}
		where $\widetilde{A}_I \in \R^{\tau_I \times \tau}$ denotes the rows of the matrix $\widetilde{A}$ that correspond to the $j$ with $w_j=I$. 
		If $\tau_I = 0$, then set $\widetilde{A}_I = \bszero_{\tau_I \times \tau} \in \R^{\tau_I \times \tau}$.   
		\ENDFOR
		\STATE Set $\widetilde{P} = \widetilde{P}_0 $.
	\end{algorithmic}
	\vspace{5pt}
	\textbf{Return:} Matrix product $\widetilde{P} = \widetilde{X} \widetilde{A}$.
\end{algorithm}
The next theorem provides an estimate on the computation time of Algorithm \ref{alg:fast-mv-prod-alt}, which again is independent of $s$.
\begin{theorem}
Let a matrix $A \in \R^{s \times \tau}$, an integer $m \in \N$, a prime $b$, and reduction indices $0=w_1\le w_2 \le \cdots \le w_{s^*} < m$ be given.
Furthermore, let $\bsz=(z_1, b^{w_2} z_2,\ldots,b^{w_s} z_s)$ be the generating vector of a reduced lattice rule corresponding to $N=b^m$
and the given reduction indices $(w_j)_{j=1}^s$. Then the matrix product $P=X A$ can be computed via Algorithm \ref{alg:fast-mv-prod-alt} using
\begin{equation*}
\calO\left(\tau \, N m \right)
\end{equation*}
operations and requiring $\calO(N \tau )$ storage.
Here, $X$ is the $N \times s$-matrix whose rows are the $N$ reduced lattice points.
\end{theorem}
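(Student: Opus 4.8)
The plan is to carry out a direct operation count for the loop of Algorithm~\ref{alg:fast-mv-prod-alt}, which runs over the $m$ levels $I=m-1,m-2,\ldots,0$, and to show that each level contributes $\calO(\tau N)$ operations. Since the columns of $X$ indexed by $j>s^*$ vanish, it suffices to account for the product $\widetilde X\widetilde A$, and the algorithm organises this by grouping, at level $I$, exactly the $\tau_I$ coordinates whose reduction index equals $I$.

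At level $I$ I would bound three contributions separately. First, assembling the matrix $\widetilde X_I\in\R^{b^{m-I}\times\tau_I}$ costs $\calO(b^{m-I}\tau_I)$ operations: each of its $\tau_I$ columns lists the $b^{m-I}$ values $(k z_{j_r}\bmod b^{m-I})/b^{m-I}$, which can be produced by successive modular additions. Second, the matrix product $\widetilde X_I\widetilde A_I\in\R^{b^{m-I}\times\tau}$ costs $\calO(b^{m-I}\tau_I\tau)$ operations. Third, stacking $\widetilde P_{I+1}$ (which has $b^{m-I-1}$ rows) $b$ times and adding the product costs $\calO(b^{m-I}\tau)$ operations. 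The essential point is that the output $\widetilde P_I$ has exactly $b^{m-I}\le N$ rows.

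To sum over the levels I would treat the stacking/addition terms and the product terms separately. The stacking/addition terms give $\calO\big(\tau\sum_{I=0}^{m-1}b^{m-I}\big)=\calO(\tau N)$ by the geometric series $\sum_{I=0}^{m-1}b^{m-I}\le \tfrac{b}{b-1}b^m$, so they are of lower order. For the products, the total is $\tau\sum_{I=0}^{m-1}b^{m-I}\tau_I$; the crux is the estimate $\sum_{I=0}^{m-1}b^{m-I}\tau_I\le Nm$, which I would obtain by bounding each of the $m$ summands by $b^{m-I}\tau_I\le N$, i.e.\ by using that the number $\tau_I$ of coordinates carrying reduction index $I$ satisfies $\tau_I\le b^I$ for the reduced point sets under consideration (longer columns, corresponding to smaller $I$, are shared by correspondingly fewer coordinates). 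Multiplying the resulting per-level bound $\calO(\tau N)$ by the $m$ levels yields the claimed $\calO(\tau N m)$.

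The storage bound is then routine: the matrices $\widetilde P_I$ may be overwritten in place, the largest being $\widetilde P_0\in\R^{N\times\tau}$, while each $\widetilde X_I$ has $b^{m-I}\tau_I=\calO(N)$ entries, so $\calO(N\tau)$ memory suffices. I expect the only real obstacle to be the product step: controlling $\sum_{I=0}^{m-1}b^{m-I}\tau_I$ uniformly, since without a structural bound on the per-level multiplicities $\tau_I$ the inner dimension could inflate the $m$-level sum; once the per-level estimate $b^{m-I}\tau_I\le N$ is secured, the remaining accounting is elementary.
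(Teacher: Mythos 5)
There is a genuine gap, and you identified its location yourself: the ``structural bound'' $\tau_I \le b^{I}$ that your product estimate hinges on is not a hypothesis of the theorem and is false for reduced lattice rules in general. The reduction indices are an arbitrary nondecreasing sequence $0=w_1\le w_2\le\cdots\le w_{s^*}<m$; nothing limits how many coordinates share the same index. The simplest counterexample is the classical (non-reduced) case $w_1=\cdots=w_{s^*}=0$, where $\tau_0=s^*$ can be arbitrarily large while your claim would force $\tau_0\le b^{0}=1$. With the naive count $\calO(b^{m-I}\tau_I\,\tau)$ for the level-$I$ product, the total is $\Theta\bigl(\tau\sum_{I=0}^{m-1}\tau_I\,b^{m-I}\bigr)$, which in that example is $\Theta(\tau\,s^*\,N)$ --- no better than the trivial bound, and incompatible with the claimed $\calO(\tau\,N\,m)$, which is independent of $s$. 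So the step cannot be repaired by a counting argument alone.

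The missing idea, and the point of the paper's proof, is that the within-level product $\widetilde{X}_I\widetilde{A}_I$ is \emph{not} computed naively but via the fast QMC matrix-vector product of \cite{DKLS15}. This is exactly why Algorithm~\ref{alg:fast-mv-prod-alt} groups coordinates by common reduction index: all $\tau_I$ columns of $\widetilde{X}_I$ are lattice-point columns with the \emph{same} modulus $b^{m-I}$, so the rows can be reordered into a circulant-type structure and the product with each of the $\tau$ columns of $\widetilde{A}_I$ is a convolution computable by FFT. This costs $\calO\bigl((m-I)\,b^{m-I}\,\tau\bigr)$ operations, crucially independent of $\tau_I$ (the inner dimension only enters through a lower-order binning step). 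Summing over levels then gives
\begin{equation*}
\calO\left(\tau\sum_{I=0}^{m-1}(m-I)\,b^{m-I}\right)
=\calO\left(\tau\,b^{m}\sum_{I=0}^{m-1}\frac{m-I}{b^{I}}\right)
=\calO\left(\tau\,N\,m\right),
\end{equation*}
where the series is handled exactly as in your geometric-sum argument. Your treatment of the stacking/addition terms and of the $\calO(N\tau)$ storage is correct and matches the paper; the defect is confined to replacing the FFT-based product by a naive one, which is precisely the ingredient that the $\calO(\tau N m)$ bound cannot do without.
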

\begin{proof}
As outlined above, it is no relevant restriction to reduce the matrices $X$ and $A$ to an $N\times s^*$-matrix $\widetilde{X}$ and an
$s^* \times \tau$-matrix $\widetilde{A}$, respectively, and then apply Algorithm \ref{alg:fast-mv-prod-alt}.

In the $I$-th step of the algorithm, the generation of the $\tau_I b^{m-I}$ lattice points requires $\calO(\tau_I b^{m-I})$
operations and storage. The most costly operation in each step is the product $\widetilde{X}_I \widetilde{A}_I$
which, via the fast QMC matrix product in \cite{DKLS15}, requires
$\calO((m-I) \, b^{m-I} \, \tau )$ operations. Summing over all $I=0,\ldots,m-1$, the computational complexity amounts to
\begin{equation*}
\calO\left( \sum_{I=0}^{m-1} (m-I) \, b^{m-I} \, \tau \right) = \calO\left(\tau \, b^m \sum_{I=0}^{m-1} \frac{m-I}{b^{I}} \right)
=\calO\left(\tau \, b^m \frac{b^2 m}{(b-1)^2} \right)
=\calO\left(\tau \, N \, m \right)
\end{equation*}
operations. Furthermore, storing the matrix $\widetilde{P}_j$ requires $\calO(b^{m-I} \, \tau )$ space,
which attains a maximum of $\calO(b^{m} \, \tau )$ for $I=0$. Note that in an efficient implementation
the matrices $\widetilde{P}_j$ are overwritten in each step and do not all have to be stored.
\end{proof}

\subsection{Transformations, shifting, and computation for transformation functions}

In applications from mathematical finance or uncertainty quantification, the integral to be approximated is often not over the unit cube but over $\R^s$ with respect to a normal distribution. In order to be able to use lattice rules in this context, one has to apply a transformation and use randomly shifted lattice rules. In the following we show that the fast reduced QMC matrix-vector product can still be used in this context.

We have noted before that projections $P_j = \pi_j(P)$ of a reduced lattice point set 
 $P$ onto the $j$-th component possess a repetitive structure, that is,
$$
	P_j
	=\underbrace{\left(X_j,\ldots,X_j \right)^{\top}}_{b^{\min(w_j,m)} \text{ times}}
$$
with
$$
	X_j = \left( 0, \frac{z_j \bmod b^{m- \min(w_j,m)}}{b^{m- \min(w_j,m)}}, \ldots, 
	\frac{(b^{m- \min(w_j,m)}-1) z_j \bmod b^{m- \min(w_j,m)}}{b^{m- \min(w_j,m)}} \right)^{\top}.
$$
This repetitive structure is preserved when applying a mapping $\varphi:[0,1] \to \R$ elementwise to the projection $P_j$ since
$$
	\varphi(P_j)=
	\underbrace{\left(\varphi(X_j),\ldots,\varphi(X_j) \right)^{\top}}_{b^{ \min(w_j,m)} \text{ times}}.
$$
This approach also works for the map $\psi:[0,1] \to [0,1]$ with $\psi(x)=\{x + \Delta\}$, i.e, for shifting of the lattice points modulo one.
In particular, this observation holds for componentwise maps of the form $\varphi:[0,1]^s \to \R$ with
$\varphi(\bsx)=(\varphi_1(x_1),\ldots,\varphi_s(x_s))$ that are applied simultaneously to all $N$ elements of the lattice point set.
For a map of this form Algorithm~\ref{alg:fast-mv-prod} can be easily adapted by replacing $X_j$ by $\varphi_j(X_j)$. If we
wish to apply Algorithm~\ref{alg:fast-mv-prod-alt} instead, the matrices $\widetilde{X}_I$ can be replaced by the correspondingly
transformed matrices, however, the fast reduced QMC matrix vector product can only be used here if all components with indices in $\tau_I$ use the same transformation.

\section{Reduced digital nets}\label{sec:reduced_nets}

In this section we present a reduced point construction for so-called digital $ (t,m,s) $-nets. Typical examples are digital nets derived from Sobol', Faure, and Niederreiter sequences. In general, a $(t,m,s)$-net is defined as follows.
 
Given an integer $b\ge 2$, an elementary interval in $[0,1)^s$ is an interval of the form
$
\prod_{j=1}^s [a_jb^{-d_j},(a_j+1)b^{-d_j})
$
where $a_j,d_j$ are nonnegative integers with $0 \le a_j < b^{d_j}$ for $1 \le j \le s$.
Let $t,m$, with $0\le t\le m$, be integers. Then a $(t,m,s)$-net in
base $b$ is a point set $P_m$ in $[0,1)^s$ with $b^m$ points such that any elementary
interval in base $b$ with volume $b^{t-m}$ contains exactly $b^t$
points of $P_m$.

Note that a low $t$-value of a $(t,m,s)$-net implies better equidistribution properties and usually also better error bounds for integration rules based on such nets.
How to find nets with low $t$-values is an involved question, see, e.g., \cite{N92,DP10}. Due to the important role of the $t$-value, one sometimes also considers a 
slightly refined notion of a $(t,m,s)$-net, which is then referred to as a $((t_\fraku)_{\fraku \subseteq [s]},m,s)$-net. The latter notion means that for any $\setu\neq\emptyset$, $\setu\subseteq [s]$, the projection of the net is a $(t_\fraku,m,\abs{\setu})$-net.

The most common method to obtain $(t,m,s)$-nets are so-called digital constructions, yielding digital $(t,m,s)$-nets. These work as follows. Let $b$ be a prime number and recall that $\FF_b$ denotes the finite field with $b$ elements. We identify this set with the integers $\{0, 1, \ldots, b-1\}$. We denote the (unique) $b$-adic digits of some $n \in \bbN$ by $\Vec{n} \in \bbF_b^\bbN$, ordered from the least significant, that is $n = \Vec{n}  \cdot (1,b,b^2, \ldots)$. Here, the sum is always finite as there are only finitely many non-zero digits in $\Vec{n}$. Thus, with a slight abuse of notation we write $ \Vec{n} \in \bbF_b^m $ if $n < b^m$.
Analogously, we denote the $b$-adic digits of $y \in [0,1)$ by $\Vec{y} \in \bbF_b^\bbN$, i.e. $y = \Vec{y}  \cdot (b^{-1},b^{-2}, \ldots)$, with the additional constraint that $\Vec{y}$ does not contain infinitely many consecutive entries equal to $b-1$.
    
Given \emph{generating matrices} $ C^{(j)} =\left(C^{(j)}_{p,q}\right)_{p,q=1}^m \in \mathbb{F}_b^{m\times m} $ for $ j = 1,\ldots,s $, a 
\emph{digital net} is defined as $ P_m(\{C^{(j)}\}_j) := \{\bsy_0,\ldots,\bsy_{b^m-1}\} $, where $ \bsy_n = (y_{1,n},\ldots,y_{s,n}) \in [0,1)^s, $ 
\begin{equation}\label{def:digital}
 \Vec{y}_{j,n} :=  C^{(j)}\Vec{n} \quad \mbox{and } y_{j,n} = \Vec{y}_{j,n} \cdot (b^{-1}, b^{-2}, \ldots, b^{-m}).
\end{equation}
From this definition, given reduction indices $ \bsw $, one can construct a reduced digital net by setting the last 
$ \min(w_j,m)$ rows of $C^{(j)}$ to $\bszero$. To be more precise,
\begin{equation}\label{reduced_genmat_net2}
\widehat{C}^{(j)}_{p,q} :=
\begin{cases}
C^{(j)}_{p,q} & \text{ if } p \in \{ 1,\ldots, m- \min(w_j,m)  \},   \\
0 & \text{ if } p \in \{ m - \min(w_j,m) + 1, \ldots, m \},
\end{cases}
\end{equation}
and applying \eqref{def:digital} with the latter choice $ \widehat{C}^{(j)} =\left(\widehat{C}^{(j)}_{p,q}\right)_{p,q=1}^m $ of the generating matrices. 
Note that $ \widehat{C}^{(j)} $ is just the zero matrix if $j>s^*$. For the reduced digital net, we then write 
$P_m(\{\widehat{C}^{(j)}\}_j) := \{\bsz_0,\ldots,\bsz_{b^m-1}\}$.

The construction \eqref{reduced_genmat_net2} allows us to generate a reduced digital net for any given digital net.

\begin{algorithm}[H]
\caption{Computation of a reduced digital net}
\label{alg:red_tsnet}
\vspace{5pt}
\textbf{Input:} Prime $b$, generating matrices $C^{(j)} \in \bbF_b^{m \times m}$, $j = 1,\ldots,s$,  reduction indices $0=w_1\le w_2 \le \cdots \le w_s$.\\
\vspace{-10pt}
\begin{algorithmic}
\STATE Set $\bsy_0 = (0,\ldots,0)$
\FOR{$j=1,\ldots,s$}
\FOR{$n = 0, 1,\ldots, b^m-1$}
\STATE  Compute $\Vec{z}_{j,n} = \widehat{C}^{(j)} \Vec{n}$, with the choice $ \widehat{C}^{(j)} $ from \eqref{reduced_genmat_net2}.
\STATE  Set $z_{j,n}= \Vec{z}_{j,n}  \cdot (b^{-1},b^{-2}, \ldots)$
\ENDFOR
\ENDFOR
\end{algorithmic}
\vspace{5pt}
\textbf{Return:} $ P_m(\{\widehat{C}^{(j)}\}_j) = \{\bsz_n = (z_{1,n} \ldots, z_{s,n})\in [0,1)^s  \colon n = 0, \ldots,b^m-1\}$.
\end{algorithm}
    
The advantage of using a reduced digital net is that in component $j$ all the values of the $z_{j,n}$ are in the set 
$\{0, 1/b^{m- \min(w_j,m)}, 2/b^{m- \min(w_j,m)}, \ldots, 1-1/b^{m- \min(w_j,m)}\}$. Since the digital net has $b^m$ points, the values necessarily repeat $b^{\min(w_j,m)}$ times. This can be used to achieve a reduction in the computation of $X A$ in the following way.

\begin{algorithm}[H]
\caption{Fast reduced matrix product for digital nets}
\label{alg:fast-mv-prod-dignet}
\vspace{5pt}
\textbf{Input:} Matrix $A \in \R^{s \times \tau}$ with $j$-th row vector $\bsa_j$, $j = 1, 2, \ldots, s$, integer $m \in \N$, prime $b$, reduction indices $0=w_1\le w_2 \le \cdots \le w_s$. Let $s^\ast \le s$ be the largest index such that $w_{s^\ast} < m$. Let $\{\bsy_n = (y_{n,1}, y_{n,2}, \ldots, y_{n,s})^\top \in [0,1)^s: n = 0, 1, \ldots, b^m-1\}$ be a digital net.

\begin{algorithmic}
    \STATE Set $P_{s^\ast+1} = \bszero \in \R^{b^m \times \tau}$. 
	\FOR{$j=s^*$ {\bf to} $1$}
    \STATE $\bullet$ Compute the row vectors
    \begin{equation*}
       \bsc_k = \frac{k}{b^{m-w_j}} \bsa_j, \quad k = 0, 1, \ldots, b^{m-w_j}-1.
    \end{equation*}
	\vspace{-5pt}
	\STATE $\bullet$ Compute $P_j$ as
	\begin{equation*}
	P_j = P_{j+1} + 
	\begin{pmatrix}
	    \bsc_{\lfloor y_{0,j}b^{m-w_j} \rfloor} \\
	    \bsc_{\lfloor y_{1,j}b^{m-w_j} \rfloor} \\
	    \vdots \\
	    \bsc_{\lfloor y_{b^m-1,j} b^{m-w_j} \rfloor}
	\end{pmatrix}\in \R^{b^{m} \times \tau}.
	\end{equation*}
	\ENDFOR
	\STATE Set $P = P_1$.
\end{algorithmic}
\vspace{5pt}
\textbf{Return:} Matrix product $P = X A$.
\end{algorithm}

Compared with computing $XA$ directly, Algorithm~\ref{alg:fast-mv-prod-dignet} reduces the number of multiplications from $\mathcal{O}(\tau b^m)$ to $\mathcal{O}(\tau b^{m-w_j})$ in coordinate $j$ and to $\mathcal{O}(\tau b^m \sum_{j=1}^{s^\ast} b^{-w_j})$ overall compared to $\mathcal{O}(s \tau b^m)$. The number of additions is the same in both instances. 

The difference here to the approach for lattice point sets is that although component $j$ has $b^{w_j}$ repeated values, the repeating pattern in each component is different and so when we add up the vectors resulting from the different components, we do not have repetitions in general and so we do not get a reduced number of additions. The analogue to the method in Section \ref{sec:basic_alg} for lattice point sets applied to digital nets would be to delete columns of $C^{(j)}$ (rather than rows as we did in this section). The problem with this approach is that if we delete columns, then the $(t,m,s)$-net property of the digital net is not guaranteed anymore. A special construction of digital $(t,m,s)$-nets with additional properties would be needed in this case.

For the case of reduced lattice point sets, we can use Theorem \ref{thm:wce_red_lattice} to obtain an error bound on the performance of the 
corresponding QMC rule when using \eqref{eq:QMC-rule} to approximate \eqref{eq:integral}. For the case of reduced digital nets, there is no 
existing error bound analogous to Theorem \ref{thm:wce_red_lattice}. We outline the error analysis in the subsequent section.

\subsection{Error analysis}

Consider  the case of digital nets from Algorithm \ref{alg:red_tsnet}. For this we fix $ m\in\NN $.
The \emph{weighted star discrepancy} is a measure of the worst-case quadrature error for a node set $P_m$, with $b^m$ nodes, defined as
\begin{equation}
    D_{b^m,\bsgamma}^{*}(P_m) := \sup_{x \in (0,1]^s} \max_{\emptyset \neq \fraku \subseteq [s]} \gamma_{\fraku} \abs{\Delta_{P_m,\fraku}(\bsx)}, 
\end{equation}
where
\begin{equation}\label{def:DeltaP}
    \Delta_{P_m,\fraku}(\bsx) := \frac{ \# \{(y_1,\ldots,y_s)\in P_m  \colon y_j < x_j, \, \forall j \in \fraku \}}{b^m}- \prod_{j\in \fraku} x_j.
\end{equation}
We additionally write $\Delta_{P_m}(\bsx) = \Delta_{P_m,[s]}(\bsx)$.
For all $k \in \{0,\ldots,b^m-1\} $, define $\Vec{k} = (\Vec{ \kappa}_0,\ldots,\Vec{\kappa}_{m-1})\in \bbF_b^m$ 
its vector of $b$-adic digits, ordered from the least significant to the most significant. Moreover, define
\begin{equation*}
    \rho(k) = \begin{cases} 1, & \mbox{if } k = 0, \\ \frac{1}{b^{r} \sin(\pi \kappa_{r-1} / b)}, 
    & \mbox{if } k = \kappa_0 + \kappa_1 b + \cdots + \kappa_{r-1} b^{r-1}, \\ 
    & \mbox{with } \kappa_0, \ldots, \kappa_{r-2} \in \{0, 1, \ldots, b-1\}, \kappa_{r-1} \in \{1, \ldots, b-1\}. \end{cases}
\end{equation*}

In the following proposition, we prove a bound on $\Delta_{P_m,\fraku}(\bsx)$.

\begin{proposition}\label{prop:reduced_discr}
Let $\widehat{P}_m:= P_m(\{\widehat{C}^{(j)}\}_j) = \{\bsz_0,\ldots,\bsz_{b^m-1}\} $ be generated by Algorithm \ref{alg:red_tsnet}, and let $\bsx\in (0,1]^s$.  Let $0 = w_1 \le w_2 \le \cdots \le w_s$ and let $s^\ast \in [s]$ be the largest index such that $w_{s^\ast} < m$. Then for any $\fraku \subseteq [s]$ with $\fraku \neq \emptyset$ we have
\begin{equation*}
\left| \Delta_{\widehat{P}_m,\fraku}(\bsx) \right|  \le  
\begin{cases}1 & \mbox{if } \fraku \not \subseteq [s^\ast], \\ 
{\displaystyle 1 - \prod_{j\in\fraku} \left ( 1 - \frac{1}{b^{m-w_j}} \right )  + \sum_{ \substack{\bsk_{\fraku} \in \NN_0^{\abs{\fraku}} \setminus \{\bm{0}\} \\ k_j \in \{0,\ldots, b^{m-w_j}-1\} \\ \sum_{j\in\fraku} (\widehat{C}^{(j)})^\top \Vec{k}_j \equiv \Vec{0} \pmod{b}} } \prod_{j\in\fraku} \rho(k_j),} & \mbox{if } \fraku \subseteq [s^\ast], \\  \end{cases} 
\end{equation*}
where $\widehat{C}^{(j)}$ is defined in \eqref{reduced_genmat_net2}. 
\end{proposition}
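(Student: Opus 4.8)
The plan is to split the proposition into its two cases, disposing of $\fraku \not\subseteq [s^\ast]$ immediately and doing the real work for $\fraku \subseteq [s^\ast]$ via a Walsh function expansion of the counting function combined with a reduction to the grid on which the reduced net lives. For the first case I observe that $\Delta_{\widehat{P}_m,\fraku}(\bsx)$ is always a difference of the point fraction $\#\{\cdots\}/b^m\in[0,1]$ and the volume $\prod_{j\in\fraku}x_j\in[0,1]$, so $|\Delta_{\widehat{P}_m,\fraku}(\bsx)|\le 1$ holds trivially; this is exactly the stated bound when $\fraku\not\subseteq[s^\ast]$. (Intuitively, for $j>s^\ast$ one has $\widehat{C}^{(j)}=\bszero$, hence $z_{j,n}=0<x_j$ for all $n$, so that coordinate carries no information.)

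For $\fraku\subseteq[s^\ast]$ every $j\in\fraku$ satisfies $w_j<m$ and the values $z_{j,n}$ lie on the grid $G_j=\{0,b^{-(m-w_j)},\ldots,(b^{m-w_j}-1)b^{-(m-w_j)}\}$, each attained $b^{w_j}$ times. The first step is a grid reduction: I replace $x_j$ by $\bar x_j:=\lceil x_j b^{m-w_j}\rceil\,b^{-(m-w_j)}$, the smallest multiple of $b^{-(m-w_j)}$ that is $\ge x_j$ (capped at $1$). Since the $z_{j,n}$ are grid points, $z_{j,n}<x_j\iff z_{j,n}<\bar x_j$, so the counting function is unchanged and I may split
\[
\Delta_{\widehat{P}_m,\fraku}(\bsx)=\underbrace{\Big(\tfrac{1}{b^m}\#\{n: z_{j,n}<\bar x_j\ \forall j\in\fraku\}-\prod_{j\in\fraku}\bar x_j\Big)}_{=:E_1}+\underbrace{\Big(\prod_{j\in\fraku}\bar x_j-\prod_{j\in\fraku}x_j\Big)}_{=:E_2}.
\]
The term $E_2$ is nonnegative and, using $\bar x_j\le 1$ together with $x_j>\bar x_j-b^{-(m-w_j)}$, is bounded by $\prod_{j}\bar x_j-\prod_{j}(\bar x_j-b^{-(m-w_j)})$; a short monotonicity argument (this expression is increasing in each $\bar x_j\in[b^{-(m-w_j)},1]$) shows it is maximal at $\bar x_j=1$, giving $E_2\le 1-\prod_{j\in\fraku}(1-b^{-(m-w_j)})$, which is the first summand in the claimed bound.

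The second step expands the counting term $E_1$. Because $\bar x_j$ is grid-aligned and the $z_{j,n}$ use only their first $m-w_j$ digits, the indicator $\mathbb{1}_{[0,\bar x_j)}$ restricted to $G_j$ has a finite Walsh expansion $\sum_{k=0}^{b^{m-w_j}-1}\eta_{j,k}\,\wal_k$, with $\eta_{j,0}=\bar x_j$. Substituting this into $E_1$ and using the fundamental character-sum identity for digital nets — namely $b^{-m}\sum_{n}\prod_{j\in\fraku}\wal_{k_j}(z_{j,n})$ equals $1$ when $\sum_{j\in\fraku}(\widehat{C}^{(j)})^\top\Vec{k}_j\equiv\Vec{0}\pmod b$ and $0$ otherwise, which follows from $\wal_{k_j}(z_{j,n})=\exp(2\pi\icomp\langle\Vec{k}_j,\widehat{C}^{(j)}\Vec{n}\rangle/b)$ and orthogonality over $\Vec{n}\in\FF_b^m$ — the $\bsk_\fraku=\bszero$ term cancels $\prod_{j}\bar x_j$ and leaves $E_1=\sum_{\bsk_\fraku\ne\bszero,\ \mathrm{dual}}\prod_{j\in\fraku}\eta_{j,k_j}$. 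The last ingredient is the classical estimate $|\eta_{j,k}|\le\rho(k)$ on the Walsh coefficients of an anchored interval indicator; since $\bar x_j$ is grid-aligned, $\eta_{j,k}$ coincides with the continuous coefficient $\int_0^{\bar x_j}\overline{\wal_k(z)}\,\rd z$ for $k<b^{m-w_j}$, to which the standard bound applies. Taking absolute values, $|E_1|\le\sum_{\bsk_\fraku\ne\bszero,\ \mathrm{dual}}\prod_{j\in\fraku}\rho(k_j)$, and combining with the bound on $E_2$ yields the proposition.

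The main obstacle I anticipate is making the grid truncation rigorous. One must justify that (i) the last $w_j$ rows of $\widehat{C}^{(j)}$ being zero (equivalently, the last $w_j$ columns of $(\widehat{C}^{(j)})^\top$) lets one restrict the dual sum to $k_j\in\{0,\ldots,b^{m-w_j}-1\}$ without losing or double-counting contributions, and (ii) the discrete Walsh expansion on $G_j$ agrees with the continuous Walsh coefficients in that range so that the $\rho(k)$-bound is applicable. The bookkeeping linking the zeroed rows of $\widehat{C}^{(j)}$ to both the coarse grid $G_j$ and the admissible range of $k_j$ is where care is needed; everything else is the standard Walsh-function discrepancy machinery together with the elementary optimization for $E_2$.
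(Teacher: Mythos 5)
Your proposal is correct and follows essentially the same route as the paper's proof: round $\bsx_\fraku$ up to the grid $b^{-(m-w_j)}$ (the paper's map $T$), observe the counting function is unchanged, bound the volume difference by $1-\prod_{j\in\fraku}(1-b^{-(m-w_j)})$, and bound the discrepancy of the rounded box via the finite Walsh expansion, the digital-net character-sum identity, and the $\rho(k_j)$ coefficient bound (the paper cites \cite[Lemmas 3.9, 3.29 and 4.75]{DP10} for exactly these three facts). The only cosmetic differences are that you prove the product inequality by a direct monotonicity argument where the paper invokes its Lemma~\ref{lem:diffprod}, and you sketch the orthogonality argument that the paper cites.
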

To prove Proposition \ref{prop:reduced_discr}, we need the next elementary lemma, extending \cite[Lemma 3.18]{DP10}, 
which can be verified by induction on $s$.
\begin{lemma}\label{lem:diffprod}
Let $J$ be a finite index set and assume $u_j,v_j \in [0,1] $, $ |u_j-v_j| \le \delta_j \in [0,1] $ for all $ j \in J$. Then
\begin{equation*}
    \abs{\prod_{j\in J} u_j - \prod_{j\in J} v_j} \le 1 - \prod_{j\in J} (1 - \delta_j) \le \sum_{j\in J} \delta_j .
\end{equation*}
\end{lemma}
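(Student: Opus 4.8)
The plan is to prove both inequalities by induction on the cardinality $\abs{J}$, treating the two bounds separately. The rightmost inequality $1 - \prod_{j \in J}(1-\delta_j) \le \sum_{j \in J}\delta_j$ is just the elementary Weierstrass-type estimate $\prod_{j\in J}(1-\delta_j) \ge 1 - \sum_{j\in J}\delta_j$, which follows by a one-line induction: splitting off an index $j_0$ and writing $J' = J \setminus \{j_0\}$, one has $(1-\delta_{j_0})\bigl(1 - \sum_{j\in J'}\delta_j\bigr) = 1 - \sum_{j\in J}\delta_j + \delta_{j_0}\sum_{j\in J'}\delta_j \ge 1 - \sum_{j\in J}\delta_j$ since every $\delta_j \ge 0$. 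So the actual content is the leftmost inequality $\abs{\prod_{j\in J} u_j - \prod_{j\in J}v_j} \le 1 - \prod_{j\in J}(1-\delta_j)$.

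For the leftmost inequality I would argue by induction on $\abs{J}$. The base case $\abs{J} = 1$ is exactly the hypothesis $\abs{u_1 - v_1} \le \delta_1 = 1 - (1-\delta_1)$. For the inductive step, fix $j_0 \in J$, set $J' = J \setminus \{j_0\}$, and abbreviate $p = \prod_{j \in J'}u_j$, $q = \prod_{j\in J'}v_j$, and $Q = \prod_{j\in J'}(1-\delta_j)$. By the inductive hypothesis $\abs{p - q} \le 1 - Q =: \epsilon$, while $p, q \in [0,1]$ since they are products of numbers in $[0,1]$. As $\prod_{j\in J}u_j - \prod_{j\in J}v_j = u_{j_0}p - v_{j_0}q$, the step reduces to the sharp bilinear sub-claim: for all $a, b, p, q \in [0,1]$ with $\abs{a-b}\le\delta$ and $\abs{p-q}\le\epsilon$ one has $\abs{ap - bq}\le 1 - (1-\delta)(1-\epsilon)$. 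Applying this with $a = u_{j_0}$, $b = v_{j_0}$, $\delta = \delta_{j_0}$ gives exactly $1 - (1-\delta_{j_0})Q = 1 - \prod_{j\in J}(1-\delta_j)$, closing the induction.

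The main obstacle is the sub-claim, specifically getting the sharp constant. The naive route, writing $u_{j_0}p - v_{j_0}q = u_{j_0}(p-q) + (u_{j_0}-v_{j_0})q$ and bounding each factor in $[0,1]$ by the triangle inequality, only yields $\epsilon + \delta$, which is strictly larger than the required $\delta + \epsilon - \delta\epsilon$ and hence too weak to propagate the induction; the two terms cannot be simultaneously extremal, because $q$ close to $1$ forces $p - q$ small (as $p \le 1$), and this coupling must be exploited. I would therefore prove the sub-claim directly by maximizing $g(b,q) = \min(1, b+\delta)\,\min(1, q+\epsilon) - bq$ over $(b,q)\in[0,1]^2$, which dominates $ap - bq$ after inserting $a \le \min(1,b+\delta)$ and $p \le \min(1, q+\epsilon)$ (and symmetrically bounds $bq - ap$, giving the absolute value). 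Splitting $[0,1]^2$ into the four rectangles determined by the signs of $b + \delta - 1$ and $q + \epsilon - 1$, on each rectangle $g$ is bilinear and thus attains its maximum at a corner; a short case check shows the value never exceeds $\delta + \epsilon - \delta\epsilon$, with equality at $b = 1-\delta$, $q = 1-\epsilon$. This establishes the sub-claim, and the two inductions together prove the lemma, the second inequality recovering the estimate of \cite[Lemma~3.18]{DP10}.
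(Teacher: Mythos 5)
Your proof is correct, and it takes essentially the route the paper itself indicates: the paper gives no details beyond asserting the lemma "can be verified by induction on $s$," and your induction on $\abs{J}$ — with the sharp bilinear step $\abs{ap-bq}\le 1-(1-\delta)(1-\epsilon)$ that is exactly what is needed to make the induction close (the naive triangle-inequality bound $\delta+\epsilon$ would not propagate, as you rightly note) — is a valid realization of that approach. Nothing is missing.
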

\begin{proof}[Proof of Proposition \ref{prop:reduced_discr}]

The bound for the case when $\fraku \not\subseteq [s^\ast]$ is trivial. Hence we can now focus on the case when $\fraku \subseteq [s^\ast]$. We operate along the lines of the proof of \cite[Theorem 3.28]{DP10}. We define the mapping $T:[0,1]^{|\fraku|} \to [0,1]^{|\fraku|}$ given by 
$$
T(\bsx_\fraku) = T((x_j)_{j \in \fraku}) = ((T_{m-w_j}(x_j))_{j \in \fraku}),
$$ 
where $T_v(x) = \lceil x b^{v} \rceil b^{-v}$. 

Now, let us assume that $\bsx_\fraku = (x_j)_{j \in \fraku} \in (0,1]^{|\fraku|}$ has been chosen arbitrarily but fixed. 
For short, we write $\overline{\bsx}_\fraku =(\overline{x}_j)_{j \in \fraku} := T(\bsx_\fraku)$. 

Recall that, by the definition of the matrices $\widehat{C}^{(j)}$, $j\in\{1,\ldots,s\}$, 
the points $\bsz_n=(z_{1,n},\ldots,z_{s,n})$ are such that the $z_{j,n}$ have at most $m-\min (w_j,m)$ non-zero digits.

Using the triangle inequality we get
\begin{equation*}
    \abs{\Delta_{\widehat{P}_m,\fraku}(\bsx)} \le  \abs{\Delta_{\widehat{P}_m,\fraku}(\bsx)- \Delta_{\widehat{P}_m,\fraku}(\overline{\bsx})} 
    + \abs{\Delta_{\widehat{P}_m,\fraku}(\overline{\bsx})}.
\end{equation*}
For any $\bsz_n \in \widehat{P}_m$, we denote the $b$-adic digits of the $j$-th component $z_{j,n}$ by $z_{j,n,i}$, $i\in\{1,\ldots,m\}$.
By construction, we see that $z_{j,n,i} = 0$ for $ i > m- \min\{w_j,m\} $. 
Hence 
\begin{equation*}
\widehat{P}_m \subseteq \left\{\left(h_1 b^{-(m- \min\{w_1,m\})},\ldots,h_s b^{-(m- \min\{w_s,m\})}\right) \colon h_j \in \NN_0 \right\}. 
\end{equation*}
This implies, as $\overline{\bsx}_\fraku  =T(\bsx_\fraku )$,  
\begin{equation*}
\# \{(z_1,\ldots,z_s)\in \widehat{P}_m \colon z_j < x_j, \, \forall j\in\fraku  \} 
= \# \{(z_1,\ldots,z_s)\in \widehat{P}_m \colon z_j < \overline{x}_j, \, \forall j\in\fraku \},
\end{equation*}
and thus
\begin{equation*}
   \abs{\Delta_{\widehat{P}_m, \fraku}(\bsx) - \Delta_{\widehat{P}_m, \fraku}(\overline{\bsx})}  = \abs{\prod_{j\in\fraku} x_j - \prod_{j\in\fraku} \overline{x}_j}
   =\prod_{j\in\fraku} \overline{x}_j - \prod_{j\in\fraku} x_j  \le 1 - \prod_{j \in \fraku} \left(1 - \frac{1}{b^{m-w_j}} \right),
\end{equation*} 
where we used Lemma~\ref{lem:diffprod} for the last inequality.

Since $ [\bm{0},\overline{\bsx}) $ is a disjoint union of intervals of the form 
$$ 
J = \prod_{j=1}^{s} [\frac{h_j}{b^{m-\min(w_j,m)}},\frac{h_j+1}{b^{m-\min(w_j,m)}}), 
$$ 
an application of \cite[Lemma 3.9]{DP10} implies that
$ \widehat{\chi_{[\bm{0},\overline{\bsx})}}(\bsk) = 0 $ for all $ \bsk \in \bbN_0^s\setminus\{\bm{0}\} $ such that $ k_j \ge b^{m-\min(w_j,m)} $ for at least one $j$. Here $ \chi_{[\bm{0},\overline{\bsx})} $ denotes the indicator function and $\widehat{\chi_{[\bm{0},\overline{\bsx})}}(\bsk)$ 
are the corresponding Walsh coefficients (we use similar notation as in \cite[Chapter~2]{DP10}). The complete analogue of this 
observation holds if we consider the projection of $J$, given by 
$J_{\setu} = \prod_{j\in\setu} [\frac{h_j}{b^{m-\min(w_j,m)}},\frac{h_j+1}{b^{m-\min(w_j,m)}})$, the projections 
$\overline{\bsx}_{\setu}$ and $\bsk_{\setu}$ of $\overline{\bsx}$ and $\bsk$, respectively, and the projections $\bsz_{n,\setu}$ of the points $\bsz_n$ in $\widehat{P}_m$. 
Then, \cite[Lemmas 3.29 and 4.75]{DP10} yield
\begin{align*}
    \abs{\Delta_{\widehat{P}_m,\fraku}(\overline{\bsx})} &= \abs{\frac{1}{b^m}\sum_{\substack{\bsk_{\fraku} \in \NN_0^{\abs{\fraku}} 
    \setminus \{\bm{0}\} \\ k_j \in \{0,\ldots, b^{m- w_j}-1\}}} \widehat{\chi_{[\bm{0},\overline{\bsx}_{\fraku})}}(\bsk_{\fraku})
    \sum_{n=0}^{b^m-1}\wal_{\bsk_{\fraku}}(\bsz_{n,\fraku})} \\
    & \le \sum_{\substack{\bsk_{\fraku} \in \NN_0^{\abs{\fraku}} \setminus \{\bm{0}\} \\ k_j \in \{0,\ldots, b^{m- w_j}-1\}} } 
    \prod_{j\in\fraku}\rho(k_j)\abs{ \frac{1}{b^m}\sum_{n=0}^{b^m-1}\wal_{\bsk}(\bsz_{n,\fraku}) } \\
    &= \sum_{\substack{\bsk_{\setu} \in \NN_0^{\abs{\setu}} \setminus \{\bm{0}\} \\ k_j \in \{0,\ldots, b^{m- w_j}-1\} \\ \sum_{j\in\setu} (\widehat{C}^{(j)})^\top \Vec{k}_j \equiv \Vec{0} \bmod{b}}} \prod_{j\in\setu} \rho(k_j).
\end{align*}
This completes the proof.
\end{proof}

Let $\bsw=(w_j)_{j=1}^s$, $ 0= w_1 \le w_2 \le \cdots\le w_s $, and let $\widehat{P}_m:= P_m(\{\widehat{C}^{(j)}\}_j) = \{\bsz_0,\ldots,\bsz_{b^m-1}\} $ be generated by Algorithm \ref{alg:red_tsnet}. Let $\fraku\neq \emptyset$, $\fraku\subseteq [s]$ be given. We then define the reduced dual net, 
\begin{multline*}
P_{m,\fraku,\bsw}^{\perp}(\{\widehat{C}^{(j)}\}_j)\\ 
=\{\bsk_{\fraku} \in \NN_0^{\abs{\fraku}} : \, k_j \in \{0,\ldots, b^{m- \min(w_j,m)}  -1\} \,\forall j\in \fraku, 
\sum_{j\in \fraku} (\widehat{C}^{(j)})^\top \Vec{k}_j \equiv \Vec{0} \bmod{b}\},
\end{multline*}
and we also define the reduced dual net without zero components,
\begin{multline*}
P_{m,\fraku,\bsw}^{\perp,\ast}(\{ \widehat{C}^{(j)}\}_j)\\ 
=\{\bsk_{\fraku} \in \NN^{\abs{\fraku}} : \, k_j \in \{ 1,\ldots, b^{m- \min(w_j,m)}  -1\} \,\forall j\in \fraku, 
\sum_{j\in \fraku} (\widehat{C}^{(j)})^\top \Vec{k}_j \equiv \Vec{0} \bmod{b}\}.
\end{multline*}

Furthermore, we let
\begin{equation}
    R_{\bsw}(\{ \widehat{C}^{(j)} \}_{j\in \fraku}) 
    := \sum_{\bsk_{\fraku} \in P_{m,\fraku,\bsw}^{\perp}(\{\widehat{C}^{(j)} \}_j) \setminus \{\bm{0}\}} \prod_{j\in \fraku}\rho(k_j).
\end{equation}
Applying Proposition \ref{prop:reduced_discr} to all projections of $\widehat{P}_m$ onto the sets
$\fraku \subseteq [s]$, $\fraku\neq\emptyset$, gives the following bound on the weighted discrepancy.

\begin{proposition}
Let $m \in \NN$, let $\bsw$ be a given set of reduction indices with $0 = w_1 \le w_2 \le \cdots \le w_s$, let $s^\ast \in [s]$ be the largest index such that $w_{s^\ast} < m$,
and let $\widehat{P}_m:= P_m(\{\widehat{C}^{(j)}\}_j)$ be generated by Algorithm \ref{alg:red_tsnet}. Then, 
\begin{equation}\label{wgt_discr_bound}
 D_{b^m,\bsgamma}^{*}(\widehat{P}_m )
 \le \max_{\emptyset \neq \fraku \subseteq [s]} \gamma_{\fraku}  \begin{cases}
1 & \mbox{if } \fraku \not\subseteq [s^\ast], \\  \left[1 - \prod_{j\in\fraku} \left ( 1 - \frac{1}{b^{m-w_j}} \right )  
  + R_{\bsw}(\{\widehat{C}^{(j)}\}_{j\in \fraku}) \right] & \mbox{if } \fraku \subseteq [s^\ast]. \end{cases}
\end{equation}
\end{proposition}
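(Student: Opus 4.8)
The plan is to derive this proposition as an almost immediate consequence of Proposition \ref{prop:reduced_discr}, which already furnishes a pointwise bound on $|\Delta_{\widehat{P}_m,\fraku}(\bsx)|$ for every fixed nonempty $\fraku \subseteq [s]$. Since the finite maximum over $\fraku$ and the supremum over $\bsx \in (0,1]^s$ commute, I would first rewrite the definition of the weighted star discrepancy as $D_{b^m,\bsgamma}^{*}(\widehat{P}_m) = \max_{\emptyset \neq \fraku \subseteq [s]} \gamma_{\fraku} \sup_{\bsx \in (0,1]^s} |\Delta_{\widehat{P}_m,\fraku}(\bsx)|$, using that $\gamma_{\fraku}$ is independent of $\bsx$ and may be pulled out of the supremum.

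The key observation is that the right-hand side of the estimate in Proposition \ref{prop:reduced_discr} is independent of $\bsx$ in both cases: it is a constant determined only by $\fraku$, the reduction indices $\bsw$, and the matrices $\widehat{C}^{(j)}$. Therefore the supremum over $\bsx$ is harmless, and for each $\fraku$ the quantity $\sup_{\bsx}|\Delta_{\widehat{P}_m,\fraku}(\bsx)|$ is bounded by the corresponding case expression. It then remains to identify the summation appearing in the case $\fraku \subseteq [s^\ast]$ with $R_{\bsw}(\{\widehat{C}^{(j)}\}_{j\in\fraku})$. Here I would use that $\fraku \subseteq [s^\ast]$ forces $w_j < m$ for every $j \in \fraku$, so that $\min(w_j,m) = w_j$; consequently the index range $k_j \in \{0,\ldots,b^{m-w_j}-1\}$ coincides with the range $k_j \in \{0,\ldots,b^{m-\min(w_j,m)}-1\}$ used to define the reduced dual net $P_{m,\fraku,\bsw}^{\perp}$. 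Combined with the identical linear condition $\sum_{j\in\fraku}(\widehat{C}^{(j)})^\top \Vec{k}_j \equiv \Vec{0} \pmod{b}$, this shows the sum over $\bsk_{\fraku} \in \NN_0^{\abs{\fraku}}\setminus\{\bm{0}\}$ is precisely $R_{\bsw}(\{\widehat{C}^{(j)}\}_{j\in\fraku})$.

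Multiplying the per-$\fraku$ bound by $\gamma_{\fraku}$ and taking the maximum over all nonempty $\fraku \subseteq [s]$ then gives exactly the claimed bound \eqref{wgt_discr_bound}, with the two branches of the displayed case distinction corresponding to whether $\fraku \subseteq [s^\ast]$ or not. I do not expect a genuine obstacle in this argument: all the analytic substance --- the rounding map $T$, the Walsh expansion of the indicator of $[\bm{0},\overline{\bsx})$, and the invocation of the lemmas of \cite{DP10} --- has already been discharged in Proposition \ref{prop:reduced_discr}. The only points requiring care are confirming that the bound there is genuinely uniform in $\bsx$ (so that it survives the supremum) and reconciling the two notational conventions $b^{m-w_j}$ versus $b^{m-\min(w_j,m)}$ for the dual-net index ranges.
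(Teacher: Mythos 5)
Your proposal is correct and is essentially the paper's own argument: the paper derives this proposition in one line by applying Proposition \ref{prop:reduced_discr} to every nonempty $\fraku \subseteq [s]$, exactly as you do, with the same observations that the pointwise bound is uniform in $\bsx$ and that for $\fraku \subseteq [s^\ast]$ the sum coincides with $R_{\bsw}(\{\widehat{C}^{(j)}\}_{j\in\fraku})$ because $\min(w_j,m)=w_j$ there. Your additional care about interchanging the supremum over $\bsx$ with the finite maximum over $\fraku$ is a harmless and valid refinement of the same route.
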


We will now analyze the expressions occurring in the square brackets in \eqref{wgt_discr_bound} in greater detail. To this end, 
we restrict ourselves to product weights in the following, i.e., we assume weights $\gamma_{\fraku} = \prod_{j\in \fraku} \gamma_j$ with $\gamma_1 \ge \gamma_2\ge  \cdots >0$.

Then, using the second inequality of Lemma \ref{lem:diffprod} yields for the first term for the case $\fraku \subseteq [s^\ast]$ in \eqref{wgt_discr_bound},
\begin{equation}\label{wgt_discr_bound_part1}
\gamma_{\fraku} \left(1 - \prod_{j\in \setu} \left ( 1 - \frac{1}{b^{m-w_j}} \right ) \right) 
\le
\frac{1}{b^m}  \gamma_{\setu} \sum_{j\in \setu} b^{w_j}
\le 
\frac{1}{b^m}  \prod_{j\in \setu} \gamma_j (1 + b^{w_j}). 
\end{equation} 

For the case $\fraku \not\subseteq [s^\ast]$ in \eqref{wgt_discr_bound}, we use that $w_j\ge m$ if $j\in \setu\setminus [s^\ast]$, and obtain for $\setv = \setu \cap [s^\ast]$ that
\begin{equation}\label{wgt_discr_bound_part2}
 \gamma_{\setu} \le \gamma_{\setv} \gamma_{\setu\setminus\setv} \frac{1}{b^m}\prod_{j\in\setu\setminus\setv} (1+b^{w_j})
 \le \frac{1}{b^m}  \prod_{j\in \setu} \gamma_j (1 + b^{w_j}). 
\end{equation}

Regarding the remaining term in \eqref{wgt_discr_bound}, we show the following lemma.
\begin{lemma}
Let $\bsw$ be a given set of reduction indices with $0 = w_1 \le w_2 \le \cdots \le w_s$, 
and let $\widehat{P}_m:= P_m(\{\widehat{C}^{(j)}\}_j)$ be generated by Algorithm \ref{alg:red_tsnet}.
Assume that the matrices $\widehat{C}^{(1)}, \ldots, \widehat{C}^{(s)} \in \FF_b^{m \times m}$ are 
the generating matrices of a digital $((t_\fraku)_{\fraku \subseteq [s]},m,s)$-net. As above, 
let $s^\ast \in [s]$ be the largest number such that $w_{s^\ast} < m$, and assume that
$\frakv\neq\emptyset$, $\frakv\subseteq [s^*]$. Then,
\begin{eqnarray}\label{wgt_discr_bound_part3}
\lefteqn{R_{\bsw}(\{\widehat{C}^{(j)}\}_{j\in \frakv})}\nonumber\\ &\le& 
\sum_{\emptyset\neq\frakp\subseteq \frakv}
\frac{b^{t_{\frakp} }}{b^{m}} \left[\frac{1}{b} \left(\frac{b^2 + b}{3}\right)^{\abs{\frakp}}  \max\left( \frac{(m-t_\frakp)^{\abs{\frakp} -1}}{(\abs{\frakp} - 1)!}, \frac{1}{b} \right) + \left( \frac{b^2-1}{3 b} \right)^{\abs{\frakp}} \prod_{j \in \frakp} (m-w_j) \right].\nonumber\\
\end{eqnarray}
\end{lemma}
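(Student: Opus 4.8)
The plan is to peel the sum apart coordinate-by-degree and reduce everything to the linear-independence (duality) characterisation of digital nets together with the elementary identity $\sum_{\kappa=1}^{b-1}\sin^{-2}(\pi\kappa/b)=(b^2-1)/3$ and the trivial bound $1/\sin\le 1/\sin^2$ on $(0,\pi)$. First I would split the sum according to its \emph{support}. Writing $\frakp=\{j\in\frakv:k_j\neq 0\}$ and using $\rho(0)=1$ together with the fact that the vanishing components drop out of the congruence $\sum_{j\in\frakv}(\widehat C^{(j)})^\top\Vec k_j\equiv\Vec 0\pmod b$, one obtains
\[
R_{\bsw}(\{\widehat C^{(j)}\}_{j\in\frakv})=\sum_{\emptyset\neq\frakp\subseteq\frakv}\ \sum_{\bsk_{\frakp}\in P^{\perp,\ast}_{m,\frakp,\bsw}}\prod_{j\in\frakp}\rho(k_j).
\]
Since this outer sum already matches the one in \eqref{wgt_discr_bound_part3}, it suffices to bound, for each fixed $\emptyset\neq\frakp\subseteq\frakv$, the inner sum $R^{\ast}_{\frakp}:=\sum_{\bsk_{\frakp}\in P^{\perp,\ast}_{m,\frakp,\bsw}}\prod_{j\in\frakp}\rho(k_j)$ by the bracketed expression times $b^{t_{\frakp}-m}$; abbreviate $d=\abs{\frakp}$.

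Next I would organise $R^{\ast}_{\frakp}$ by \emph{degrees}. For $k_j\ge 1$ let $r_j$ be the number of $b$-adic digits of $k_j$, so $1\le r_j\le m-w_j$ since $k_j<b^{m-w_j}$, and write $\rho(k_j)=b^{-r_j}/\sin(\pi\kappa^{(j)}/b)$ with $\kappa^{(j)}$ the leading digit. Grouping by the degree vector $(r_j)_{j\in\frakp}$ peels the factor $b^{-D}$, with $D=\sum_{j\in\frakp}r_j$, out front. The key input is the duality characterisation of digital nets (cf.\ \cite{DP10}): since $\{\widehat C^{(j)}\}_{j\in\frakp}$ generate a digital $(t_{\frakp},m,d)$-net, any collection of rows with total count at most $m-t_{\frakp}$ is linearly independent over $\FF_b$. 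Reading the congruence as a linear dependence among rows $1,\ldots,r_j$ of $\widehat C^{(j)}$ with coefficients the digits of $k_j$, this yields (i) no admissible nonzero $\bsk_{\frakp}$ when $D\le m-t_{\frakp}$, and (ii) for $D>m-t_{\frakp}$ that the number of lower-digit completions for each fixed tuple of leading digits is at most $b^{\,(D-d)-(m-t_{\frakp})}$ (and at most one when $D-d\le m-t_{\frakp}$).

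Finally I would split into two regimes and sum, summing the leading-digit weights via $1/\sin\le 1/\sin^2$ and the identity above to gain a factor per coordinate. In the low-degree regime $m-t_{\frakp}<D\lesssim m-t_{\frakp}+d$ there is at most one completion per leading-digit tuple; counting degree vectors by compositions, $\binom{D-1}{d-1}\le(m-t_{\frakp})^{d-1}/(d-1)!$, and summing the geometric series in $b^{-D}$ produces the first bracket term $\tfrac1b(\tfrac{b^2+b}{3})^{d}\max((m-t_{\frakp})^{d-1}/(d-1)!,\,1/b)$, the $\max$ and the residual $1/b$ covering the degenerate case $m=t_{\frakp}$. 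In the high-degree regime the completion bound $b^{(D-d)-(m-t_{\frakp})}$ cancels the peeled $b^{-D}$ and an extra $b^{-d}$, leaving at most $(\tfrac{b^2-1}{3b})^{d}$ per degree vector; since each $r_j$ ranges in $\{1,\ldots,m-w_j\}$ there are at most $\prod_{j\in\frakp}(m-w_j)$ such vectors, giving the second bracket term. The common prefactor $b^{t_{\frakp}-m}$ falls out of the solution count in both regimes.

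The hard part will be the constrained digit sum in the second step: one must simultaneously extract the $b^{t_{\frakp}}$ saving from the net's independence property, sum the leading-digit sine weights through the $\sin^{-2}$ identity, and respect the two-sided degree window $m-t_{\frakp}+1\le D$ and $r_j\le m-w_j$. Threading these so that the low- and high-degree regimes produce \emph{exactly} the two advertised terms, with the precise constants $(b^2+b)/3$ and $(b^2-1)/(3b)$, is the delicate point; everything else reduces to composition counting and a geometric series.
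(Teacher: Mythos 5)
Your proposal is correct and follows essentially the same route as the paper's proof: decompose by the support $\frakp$ of $\bsk_{\frakv}$, expand by digit lengths with $\rho(k_j)\le b^{-a_j}\sin^{-2}(\pi\kappa_{j,a_j-1}/b)$ and the identity $\sum_{\kappa=1}^{b-1}\sin^{-2}(\pi\kappa/b)=(b^2-1)/3$, count solutions of the digit congruence via the $(t_{\frakp},m,\abs{\frakp})$-net property exactly as in \cite[Lemma~16.40]{DP10} (zero/one/$b^{D-\abs{\frakp}-m+t_{\frakp}}$ solutions in the three degree ranges), and split into the two regimes that yield the two bracketed terms. The only point needing care is the one you already flagged: the low-degree regime requires the tail-sum estimate \cite[Lemma~13.24]{DP10} (rather than the crude bound $\binom{D-1}{\abs{\frakp}-1}\le(m-t_{\frakp})^{\abs{\frakp}-1}/(\abs{\frakp}-1)!$, which can fail by a constant) to land exactly on the constant $(b^2+b)/3$ and the $\max(\cdot,1/b)$ term.
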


\begin{proof}
Recall that for each $\widehat{C}^{(j)}$, $j\in\frakv$, only the first $m-w_j$ rows of $\widehat{C}^{(j)}$ are non-zero. Consequently,
\begin{align}\label{eq:sum_frakp}
R_{\bsw}(\{{C}^{(j)}\}_{j\in \frakv}) 
= & \sum_{\bsk_{\frakv} \in P_{m,\frakv,\bsw}^{\perp}(\{\widehat{C}^{(j)}\}) \setminus \{\bm{0}\}} \ \ \prod_{j\in \frakv}\rho(k_j) \nonumber \\ 
\le &   \sum_{\bsk_{\frakv} \in P_{m,\frakv,\bszero}^{\perp}(\{\widehat{C}^{(j)}\}) \setminus \{\bm{0}\}} \ \ \prod_{j\in \frakv}\rho(k_j)\nonumber\\
= & \sum_{\emptyset\neq\frakp\subseteq \frakv} \ \ \
\sum_{ \bsk_{\frakp} \in P_{m,\frakp,\bszero}^{\perp,\ast}(\{\widehat{C}^{(j)}\}) }\ \ \ \prod_{j\in \frakp}\rho(k_j).
\end{align}
We use the estimate $(\sin(x))^{-1} \le (\sin(x))^{-2}$ for $0 < x < \pi$ to estimate $\rho(k_j) \le \frac{1}{b^r \sin^2(\pi \kappa_{j,a_j-1} /b)}$ for positive $k_j$, where we write $k_j = \kappa_{j,0} + \kappa_{j,1} b + \cdots + \kappa_{j, a_j-1} b^{a_j-1}$, with $\kappa_{j,a_j-1}\neq 0$. 
Now we can adapt the proof of \cite[Lemma~16.40]{DP10}, where we replace 
$\frac{1}{b^{2r}} \left(\frac{1}{\sin^2(\pi \kappa_{j, a_j-1} /b) } - \frac{1}{3} \right) $ with 
$\frac{1}{b^r \sin^2(\pi \kappa_{j, a_j-1}/b)}$, and $[s]$ by $\frakp$ to get the result. 

To simplify the notation we prove an upper bound on the inner sum in \eqref{eq:sum_frakp} 
for the special case $\frakp = [s^\ast]$ and assume that the underlying point set generated by 
$\{\widehat{C}_j\}_{j\in\frakp}=\{\widehat{C}_j\}_{j\in [s^\ast]}$ is a digital $(t,m,s^\ast)$-net. Then we obtain
\begin{align*}
& \sum_{ \bsk_{\frakp} \in P_{m,\frakp,\bszero}^{\perp,\ast}(\{\widehat{C}^{(j)}\}) }\ \ \ \prod_{j\in \frakp}\rho(k_j) \\ 
=& \sum_{ \bsk_{[s^*]} \in P_{m,[s^*],\bszero}^{\perp,\ast}(\{\widehat{C}^{(j)}\})}\ \ \ \prod_{j=1}^{s^*}\rho(k_j) \\
\le & \sum_{a_1=1}^{m-w_1} \cdots \sum_{a_{s^\ast}=1}^{m-w_{s^\ast}} b^{-a_1-\cdots - a_{s^\ast}} \underbrace{ \sum_{k_1=b^{a_1-1}}^{b^{a_1}-1} \cdots \sum_{k_{s^\ast}=b^{a_{s^\ast}-1}}^{b^{a_{s^\ast}}-1} }_{(\widehat{C}^{(1)})^\top \vec{k}_1 + \cdots + (\widehat{C}^{(s^\ast))^\top} \vec{k}_{s^\ast} \equiv  \vec{0} \pmod{b}} \prod_{j=1}^{s^\ast} \frac{1}{\sin^2 (\pi \kappa_{j,a_j-1} / b)} 
\\ 
\le & \sum_{a_1=1}^{m-w_1} \cdots \sum_{a_{s^\ast}=1}^{m-w_{s^\ast}} b^{-a_1-\cdots - a_{s^\ast}}  \left(\sum_{\kappa=1}^{b-1} \frac{1}{\sin^2(\pi \kappa/b)} \right)^{s^\ast} \times \\ 
& \begin{cases} 
0 & \mbox{if } a_1 + \cdots + a_{s^\ast} \le m-t, \\ 1 & \mbox{if } m-t < a_1 + \cdots + a_{s^\ast} \le m-t+s^\ast, \\ b^{a_1+\cdots + a_{s^\ast}- s^\ast -m+t} & \mbox{if } a_1+\cdots + a_{s^\ast} > m-t+s^\ast, 
\end{cases}
\end{align*}
where the second inequality follows from estimating the number of solutions of the linear system $(\widehat{C}^{(1)})^\top \vec{k}_1 + \cdots + (\widehat{C}^{(s^\ast)})^\top \vec{k}_{s^\ast} \equiv  \vec{0} \pmod{b}$, which was done in the proof of \cite[Lemma~16.40]{DP10}. From \cite[Corollary~A.23]{DP10} we have $\sum_{\kappa = 1}^{b-1} \frac{1}{\sin^2(\pi \kappa /b) } = \frac{b^2-1}{3}$.

Set
\begin{align*}
\Sigma_1 := & \left(\frac{b^2-1}{3}\right)^{s^\ast} \underbrace{ \sum_{a_1=1}^{m-w_1} \cdots \sum_{a_{s^\ast}=1}^{m-w_{s^\ast}} }_{m-t+1 \le a_1 + \cdots + a_{s^\ast} \le m-t+s^\ast } b^{-a_1-\cdots - a_{s^\ast}}, \\
\Sigma_2 := & \left(\frac{b^2-1}{3}\right)^{s^\ast} \underbrace{ \sum_{a_1=1}^{m-w_1} \cdots \sum_{a_{s^\ast}=1}^{m-w_{s^\ast}} }_{a_1 + \cdots + a_{s^\ast} > m-t+s^\ast } b^{-s^\ast - m + t},
\end{align*}
then the inner sum in \eqref{eq:sum_frakp} is bounded by
$ \Sigma_1 + \Sigma_2$.

If $m-t+1 - s^\ast \ge 0$, then we have
\begin{align*}
\Sigma_1 = & \left(\frac{b^2-1}{3 b}\right)^{s^\ast} \underbrace{ \sum_{b_1=0}^{m-w_1-1} \cdots \sum_{b_{s^\ast}=0}^{m-w_{s^\ast}-1} }_{m-t+1-s^\ast \le b_1 + \cdots + b_{s^\ast} \le m-t  } b^{-b_1-\cdots - b_{s^\ast}} \\ \le & \left(\frac{b^2-1}{3 b}\right)^{s^\ast}  \sum_{\ell = m-t - s^\ast+1}^{m-t } b^{-\ell} {\ell + s^\ast -1 \choose s^\ast-1} \\ \le &  \left(\frac{b^2-1}{3 b}\right)^{s^\ast}  \sum_{\ell = m-t - s^\ast+1}^{\infty} b^{-\ell} {\ell + s^\ast -1 \choose s^\ast-1} \\ \le &  \left(\frac{b^2-1}{3 b}\right)^{s^\ast} \frac{1}{b^{m-t-s^\ast+1}} {m-t \choose s^\ast-1}  \left( \frac{b}{b-1}  \right)^{s^\ast} \\ = & \left( \frac{b^2 + b}{3} \right)^{s^\ast} \frac{b^t}{b^{m+1}}\, \frac{(m-t)^{s^\ast-1}}{(s^\ast-1)!},
\end{align*}
where we used \cite[Lemma~13.24]{DP10} to estimate the infinite sum.

If $m-t+1 - s^\ast < 0$, then we have
\begin{align*}
\Sigma_1 \le & \left(\frac{b^2-1}{3 b}\right)^{s^\ast}  \sum_{\ell = 0}^\infty {\ell + s-1 \choose s-1} b^{-\ell}  \le \left(\frac{b^2-1}{3 b}\right)^{s^\ast} \left( \frac{b}{b-1}  \right)^{s^\ast} \le \left(\frac{b^2 + b}{3} \right)^{s^\ast} \frac{b^t}{b^m} \frac{1}{b^2}.
\end{align*}

For $\Sigma_2$ we use the estimate
\begin{equation*}
\Sigma_2 \le \left(\frac{b^2-1}{3 b } \right)^{s^\ast} \frac{b^t}{b^m} \sum_{a_1=1}^{m-w_1} \cdots \sum_{a_{s^\ast}=1}^{m-w_{s^\ast}} 1 \le  \left(\frac{b^2-1}{3b } \right)^{s^\ast} \frac{b^t}{b^m} \prod_{j \in [s^\ast]} (m-w_j).
\end{equation*}

The argument for the case $\frakp=[s^\ast]$ can be repeated analogously for all $\emptyset\neq\frakp\subseteq \frakv$ in \eqref{eq:sum_frakp}, 
by adapting notation, and in particular by replacing $t$ by $t_{\frakp}$. This yields the result claimed in the lemma, by plugging these estimates into \eqref{eq:sum_frakp}.
\end{proof}

Inserting the estimates in \eqref{wgt_discr_bound_part1}, \eqref{wgt_discr_bound_part2}, and \eqref{wgt_discr_bound_part3} into \eqref{wgt_discr_bound} yields 
the following theorem. 

\begin{theorem}\label{thm:advanced_discr_bound}
Let $\bsw$ be a given set of reduction indices with $0 = w_1 \le w_2 \le \cdots \le w_s$, 
and let $\widehat{P}_m:= P_m(\{\widehat{C}^{(j)}\}_j)$ be generated by Algorithm \ref{alg:red_tsnet}. 
Furthermore, assume product weights  $\gamma_{\fraku} = \prod_{j\in \fraku} \gamma_j$ with $\gamma_1 \ge \gamma_2\ge  \cdots >0$. Then, 
\begin{eqnarray}\label{adv_discr_bound}
 D_{b^m,\bsgamma}^{*}(\widehat{P}_m)
 &\le& \max_{\emptyset \neq \fraku \subseteq [s]} 
 \left[\frac{1 }{b^m}  \prod_{j\in \setu} \gamma_j (1 + b^{w_j})\right]\nonumber\\
 &&+  \max_{\emptyset \neq \frakv \subseteq [s^\ast] } 
 \left[\gamma_{\frakv} 
 \sum_{\emptyset\neq\frakp\subseteq \frakv}
\frac{b^{t_{\frakp} }}{b^{m}} \left[\frac{1}{b} \left(\frac{b^2 + b}{3}\right)^{\abs{\frakp}}  \max\left( \frac{(m- t_\frakp)^{\abs{\frakp} -1}}{(\abs{\frakp} - 1)!}, \frac{1}{b} \right)\right.\right.\nonumber\\ 
&& \left.\left.+ \left( \frac{b^2-1}{3 b} \right)^{\abs{\frakp}} \prod_{j \in \frakp} (m-w_j) \right]\right].
 \end{eqnarray}
\end{theorem}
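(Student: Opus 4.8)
The plan is to treat this theorem as a direct assembly of the weighted-discrepancy bound \eqref{wgt_discr_bound} together with the three component estimates \eqref{wgt_discr_bound_part1}, \eqref{wgt_discr_bound_part2}, and \eqref{wgt_discr_bound_part3}; no new analytic input is needed beyond bookkeeping of the two maxima. First I would recall that \eqref{wgt_discr_bound} expresses $D_{b^m,\bsgamma}^{*}(\widehat{P}_m)$ as a single maximum over $\emptyset\neq\fraku\subseteq[s]$ of a quantity defined by cases according to whether $\fraku\subseteq[s^\ast]$ or not. Since the target bound \eqref{adv_discr_bound} is a \emph{sum} of two maxima, the organizing principle is the elementary inequality $\max_{\fraku}(A_\fraku+B_\fraku)\le \max_{\fraku}A_\fraku+\max_{\fraku}B_\fraku$, which lets me separate the contribution of the ``difference-of-products'' part from the contribution of $R_{\bsw}$.

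Second I would handle the two cases of \eqref{wgt_discr_bound} and show that, in either case, the non-$R_{\bsw}$ part is dominated by the first summand $\frac{1}{b^m}\prod_{j\in\fraku}\gamma_j(1+b^{w_j})$. When $\fraku\subseteq[s^\ast]$ the relevant term is $\gamma_\fraku\bigl(1-\prod_{j\in\fraku}(1-\frac{1}{b^{m-w_j}})\bigr)$, which is exactly the quantity bounded in \eqref{wgt_discr_bound_part1}. When $\fraku\not\subseteq[s^\ast]$ the bracketed term in \eqref{wgt_discr_bound} is simply $\gamma_\fraku$, and \eqref{wgt_discr_bound_part2} (splitting $\fraku$ into $\setv=\fraku\cap[s^\ast]$ and $\fraku\setminus\setv$ and using $w_j\ge m$ for $j\in\fraku\setminus[s^\ast]$) shows this too is at most $\frac{1}{b^m}\prod_{j\in\fraku}\gamma_j(1+b^{w_j})$. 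Taking the maximum over all $\emptyset\neq\fraku\subseteq[s]$ then yields the first max term of \eqref{adv_discr_bound}, which correctly ranges over every subset of $[s]$.

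Third, for the $R_{\bsw}$ contribution, I would note that it appears only in the case $\fraku\subseteq[s^\ast]$, so after relabelling $\fraku$ as $\frakv$ the maximum ranges only over $\emptyset\neq\frakv\subseteq[s^\ast]$. Here I insert the lemma bound \eqref{wgt_discr_bound_part3} for $R_{\bsw}(\{\widehat{C}^{(j)}\}_{j\in\frakv})$ and multiply through by $\gamma_{\frakv}$, which reproduces the second max term of \eqref{adv_discr_bound} verbatim. Combining the two maxima via the splitting inequality completes the argument.

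The step requiring the most care—rather than difficulty—is the bookkeeping around the two maxima: one must verify that folding the $\fraku\not\subseteq[s^\ast]$ case into the first $\max_{\fraku\subseteq[s]}$ term is legitimate (it is, since \eqref{wgt_discr_bound_part2} produces the same functional form $\frac{1}{b^m}\prod_{j\in\fraku}\gamma_j(1+b^{w_j})$ for those $\fraku$), and that the second maximum is correctly restricted to $[s^\ast]$ because $R_{\bsw}$ contributes nothing for index sets not contained in $[s^\ast]$. Once \eqref{wgt_discr_bound} through \eqref{wgt_discr_bound_part3} are in hand, no genuine analytic obstacle remains.
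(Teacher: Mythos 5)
Your proposal is correct and takes essentially the same approach as the paper, whose proof consists of exactly this assembly: inserting the estimates \eqref{wgt_discr_bound_part1}, \eqref{wgt_discr_bound_part2}, and \eqref{wgt_discr_bound_part3} into the case-split bound \eqref{wgt_discr_bound}. Your explicit bookkeeping via $\max_{\fraku}(A_{\fraku}+B_{\fraku})\le\max_{\fraku}A_{\fraku}+\max_{\fraku}B_{\fraku}$, with $B_{\fraku}=0$ for $\fraku\not\subseteq[s^\ast]$, merely spells out what the paper leaves implicit.
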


We impose that the term
\[
 \max_{\emptyset \neq \fraku \subseteq [s]}  \left[\frac{1}{b^m}  \prod_{j\in \setu} \gamma_j (1 + b^{w_j})\right]
\]
in \eqref{adv_discr_bound} be bounded by $\kappa/b^m$ for some constant $\kappa> 0$ independent of $s$. Let $j_0\in \NN$ be minimal such that $\gamma_j \le 1$ for all $j > j_0$. Then we impose $ \prod_{j=1}^{s} \gamma_j(1 + b^{w_j})\le  \gamma_1^{j_0}\prod_{j=1}^{s} (1 + \gamma_j b^{w_j})\le \kappa $. Hence it is sufficient to choose $\kappa >  \gamma_1^{j_0}$ and  for all $j \in [s]$, 
\begin{equation}\label{wchoice}
w_j := \min\left(\floor{\log_b\left(\frac{\left(\frac{\kappa}{ \gamma_1^{j_0}}\right)^{1/s}-1}{\gamma_j}\right)},m\right).
\end{equation}

\begin{corollary}\label{cor:disc_bound}
Let $\bsgamma$  be product weights of the form  $\gamma_{\fraku} = \prod_{j\in \fraku} \gamma_j$ with $\gamma_1 \ge \gamma_2\ge  \cdots >0$ such that $\sum_{j=1}^\infty \gamma_j < \infty$. Let $C^{(1)}, \ldots, C^{(s)} \in \FF_b^{m \times m}$ be the generating matrices of a digital $((t_{\fraku})_{\fraku \subseteq [s]},m,s)$-net. Let the reduction indices $\bsw$ be chosen according to \eqref{wchoice} and let $s^\ast \in [s]$ be the largest number such that $w_{s^\ast} < m$.  Then there is a constant $C > 0$ independent of $s$ and $m$, such that
\begin{align*}
& D_{b^m,\bsgamma}^{*}(P_m(\{C^{(j)}\}_j)) \le \frac{C}{b^m} \\ 
& +  
\max_{\emptyset \neq \frakv \subseteq [s^\ast]} 
 \left[\gamma_{\frakv} 
 \sum_{\emptyset\neq\frakp\subseteq \frakv}
\frac{b^{t_{\frakp} }}{b^{m}} \left[\frac{1}{b} \left(\frac{b^2 + b}{3}\right)^{\abs{\frakp}}  \max\left( \frac{(m-t)^{\abs{\frakp} -1}}{(\abs{\frakp} - 1)!}, \frac{1}{b} \right)\right.\right.  \\ &
 \left.\left.+ \left( \frac{b^2-1}{3 b} \right)^{\abs{\frakp}} \prod_{j \in \frakp} (m-w_j) \right]\right].
\end{align*}

\end{corollary}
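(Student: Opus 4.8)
The plan is to read the corollary as a direct specialization of Theorem~\ref{thm:advanced_discr_bound}: once the reduction indices are pinned down by \eqref{wchoice}, the second (double) maximum in \eqref{adv_discr_bound} is already \emph{verbatim} the second term claimed in the corollary, so it is simply carried over unchanged. Consequently the whole substance of the argument is to show that the first maximum in \eqref{adv_discr_bound}, namely
\[
\max_{\emptyset\neq\fraku\subseteq[s]}\frac{1}{b^m}\prod_{j\in\fraku}\gamma_j\bigl(1+b^{w_j}\bigr),
\]
is bounded by $C/b^m$ for a constant $C>0$ independent of $s$ and $m$; equivalently, writing $a_j:=\gamma_j(1+b^{w_j})$, that $\max_{\fraku}\prod_{j\in\fraku}a_j$ is bounded independently of $s,m$. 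First I would reduce this maximum over subsets to a single product: since the empty product equals $1$, the maximizing set retains exactly the factors that are at least $1$, whence $\max_{\emptyset\neq\fraku\subseteq[s]}\prod_{j\in\fraku}a_j\le\prod_{j=1}^s\max(1,a_j)$.

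Next, let $j_0$ be minimal with $\gamma_j\le 1$ for all $j>j_0$; this index is finite because $\gamma_1\ge\gamma_2\ge\cdots$ is summable, hence a null sequence. Splitting the product at $j_0$ and using the two elementary per-factor estimates $\max(1,\gamma_j(1+b^{w_j}))\le 1+\gamma_j b^{w_j}$ for $j>j_0$ (valid since $\gamma_j\le 1$) and $\max(1,\gamma_j(1+b^{w_j}))\le\gamma_1(1+\gamma_j b^{w_j})$ for $j\le j_0$ (valid since then $\gamma_1\ge\gamma_{j_0}>1$), I would obtain
\[
\max_{\emptyset\neq\fraku\subseteq[s]}\prod_{j\in\fraku}a_j\le\gamma_1^{\,j_0}\prod_{j=1}^s\bigl(1+\gamma_j b^{w_j}\bigr),
\]
which is exactly the first inequality recorded in the paragraph preceding \eqref{wchoice}.

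It then remains to bound $\prod_{j=1}^s(1+\gamma_j b^{w_j})$ uniformly in $s$ and $m$, which I would do through $\log(1+x)\le x$, i.e. $\prod_{j=1}^s(1+\gamma_j b^{w_j})\le\exp(\sum_{j=1}^s\gamma_j b^{w_j})$. Here the choice \eqref{wchoice} enters: whenever $w_j>0$ it guarantees $\gamma_j b^{w_j}\le(\kappa/\gamma_1^{\,j_0})^{1/s}-1$, and since there are at most $s$ such indices and $s\,(x^{1/s}-1)\le x-1$ for $x\ge 1$ (the left side is decreasing in $s$ with value $x-1$ at $s=1$), the total contribution of these coordinates is at most $\kappa/\gamma_1^{\,j_0}-1$, a constant. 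For the remaining coordinates one only has $w_j=0$ and $\gamma_j b^{w_j}=\gamma_j$, whose total contribution is controlled by $\sum_{j=1}^\infty\gamma_j<\infty$. Summing the two contributions bounds the exponent, hence the product, by a constant free of $s$ and $m$, so one may take $C=\gamma_1^{\,j_0}\exp(\kappa/\gamma_1^{\,j_0}-1+\sum_{j=1}^\infty\gamma_j)$; substituting $C/b^m$ together with the unchanged second term back into \eqref{adv_discr_bound} gives the statement.

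The step I expect to be delicate is precisely the treatment of the coordinates with $w_j=0$. The slick telescoping suggested around \eqref{wchoice}---that each factor satisfies $1+\gamma_j b^{w_j}\le(\kappa/\gamma_1^{\,j_0})^{1/s}$, so that the full product collapses to $\kappa/\gamma_1^{\,j_0}$---is only valid while the floor in \eqref{wchoice} is nonnegative. For the leading coordinates (finitely many, but in an $s$-dependent way) the nominal value of $w_j$ would be negative and must be truncated to $0$, and this breaks the per-factor bound. This is exactly where the hypothesis $\sum_j\gamma_j<\infty$ is indispensable: it absorbs the truncated coordinates into a convergent tail, keeping the product bounded uniformly in $s$ and $m$. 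Everything else is routine bookkeeping, since the second term of the corollary is inherited directly from Theorem~\ref{thm:advanced_discr_bound}.
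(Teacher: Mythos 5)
Your proposal is correct, and its skeleton coincides with the paper's: the second maximum in \eqref{adv_discr_bound} is carried over verbatim from Theorem~\ref{thm:advanced_discr_bound}, and all the work goes into showing that the choice \eqref{wchoice} makes $\max_{\emptyset\neq\fraku\subseteq[s]} b^{-m}\prod_{j\in\fraku}\gamma_j(1+b^{w_j})$ at most $C/b^m$ with $C$ independent of $s$ and $m$. Where you part ways with the paper is in how that product is bounded. The paper argues by pure telescoping: \eqref{wchoice} is designed so that each factor obeys $1+\gamma_j b^{w_j}\le(\kappa/\gamma_1^{j_0})^{1/s}$, whence the product of $s$ such factors is at most $\kappa/\gamma_1^{j_0}$ and the whole expression is at most $\kappa$; the summability of the $\gamma_j$ is never invoked at this step. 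You instead pass to $\exp\bigl(\sum_j\gamma_j b^{w_j}\bigr)$ and split the coordinates into those where \eqref{wchoice} returns a positive value (controlled via the per-coordinate bound together with $s(x^{1/s}-1)\le x-1$ for $x\ge 1$) and those where the floor would be negative and must be truncated to $w_j=0$ (controlled by $\sum_j\gamma_j<\infty$). This difference matters: for any fixed $j$, as $s$ grows one has $(\kappa/\gamma_1^{j_0})^{1/s}-1\to 0$, so the floor in \eqref{wchoice} is negative for the leading coordinates and truncation to $0$ is unavoidable (the model anyway requires $w_1=0$); for those coordinates the per-factor bound underlying the paper's telescoping simply fails. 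Your argument therefore closes a gap that the paper glosses over, and it also explains why the hypothesis $\sum_{j=1}^\infty\gamma_j<\infty$ appears in the corollary at all, since the paper's own argument never uses it. The only blemish is the edge case $\gamma_1\le 1$: then there is no index $j\le j_0$ with $\gamma_{j_0}>1$, but in that case your first per-factor estimate applies to every coordinate and the conclusion is immediate, so nothing is lost.
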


\begin{remark}
Note that the choice of the quantities $w_j$ in \eqref{wchoice} depends on $s$. For sufficiently fast decaying weights $\gamma_j$, it is possible to 
choose the $w_j$ such that they do no longer depend on $s$. Indeed, suppose, e.g., that $\gamma_j=j^{-2}$. Then we could choose the $w_j$ such that, 
for some $\tau\in(1,2)$,
\[
  w_j \le \min\left(\floor{\log_b\left( j^{2-\tau}\right)},m\right).
\]
This then yields
\begin{eqnarray*}
  \prod_{j=1}^s (1+\gamma_j b^{w_j})\le \exp\left(\sum_{j=1}^s \log (1+\gamma_j b^{w_j})\right)
  \le \exp\left(\sum_{j=1}^s \gamma_j b^{w_j}\right)\le \exp (\zeta(\tau)),
\end{eqnarray*}
where $\zeta(\cdot)$ is the Riemann zeta function. This then yields a dimension-independent bound on the term $\prod_{j=1}^s \gamma_j (1+b^{w_j})$ from above.
\end{remark}
\begin{remark}
 The term involving the maximum in the error bound of Corollary \ref{cor:disc_bound} crucially depends on the weights $\bsgamma$ and their interplay 
 with the $t$-values of the projections of $\widehat{P}_m$. In particular, small $t$-values in combination with sufficiently fast decaying weights should yield 
 tighter error bounds. However, the analysis of $t$-values of $(t,m,s)$-nets is in general non-trivial (see, e.g., \cite{DP10}). 
\end{remark}

\section{Reduced Monte Carlo}\label{sec:reduced_MC}

The idea of reduction is not limited to QMC algorithms, but can also be applied to Monte Carlo algorithms, as shall be discussed in this section.

\medskip

Let $N = b^m$ for some $b, m \in \mathbb{N}$ with $b \ge 2$. Further let $0 = w_1 \le w_2 \le w_3 \le \cdots \le w_s \le w_{s+1} =  m$ be some integers.
Let $N_j = N b^{-w_j} = b^{m-w_j}$ and $N_{s+1} = 1$. In particular, $N_1 = N$. Further we define $M_j = N_j/ N_{j+1} = b^{w_{j+1}-w_j}$ for
$j = 1, 2, \ldots, s-1$ and $M_s = N_s = b^{m-w_s}$. Then $N_j = M_j M_{j+1} \cdots M_{s-1} M_s$ and any integer $0 \le n < N_j$
can be represented by $n = m_s N_{s+1} + m_{s-1} N_{s} + \cdots + m_j N_{j+1}$ with $0 \le m_j < M_j$ for $1 \le j \le s$.

For each coordinate $1 \le j \le s$ we generate $N_j$ i.i.d. samples $y_{j,0}, \ldots, y_{j, N_j-1}$.
Different coordinates are also assumed to be independent.

Now for $0 \le m_j < M_j$ for $1 \le j \le s$ let
\begin{equation}\label{randpts}
x_{j, m_s N_{s+1} + m_{s-1} N_{s} + \cdots + m_1 N_2} = y_{j, m_s N_{s+1} + \cdots + m_j N_{j+1} }.
\end{equation}
This means that in coordinate $j$ we only have $N_j$ different i.i.d. samples.

\subsection{Computational cost reduction}\label{costred}

For each $0 \le n < N$ we need to compute
\begin{equation*}
x_{1,n}\bsa_1 + x_{2,n} \bsa_2 + \cdots + x_{s,n} \bsa_s,
\end{equation*}
where $\bsa_j$ is the $j$-th row of $A$. Using \eqref{randpts} we can write this as
\begin{equation*}
\sum_{j=1}^s y_{j, m_s N_{s+1} + \cdots + m_j N_{j+1}} \bsa_j,
\end{equation*}
which we need to compute for each $0 \le m_j < M_j$. We can do this recursively in the following way:
\begin{itemize}
\item First compute: $z_{s,m_s} = y_{s, m_s} \bsa_s$ for $0 \le m_s < M_s$ and store the results.
\item For $j = s-1, s-2, \ldots, 1$ compute:
\begin{equation*}
z_{j, m_s N_{s+1} + \cdots + m_j N_{j+1}} = y_{j, m_s N_{s+1} + \cdots + m_j N_{j+1}} \bsa_j + z_{j+1, m_s N_{s+1} + \cdots + m_{j+1} N_{j+2}} 
\end{equation*}
for $m_j = 0, 1, \ldots, M_j-1$, and store the resulting vectors.
\end{itemize}

Computing the values $z_{s,m_s}$ costs $\mathcal{O}(\tau N_s)$ operations. Computing $z_{j, m_s N_{s+1} + \cdots + m_j N_{j+1}}$ costs 
$\mathcal{O}\left(\tau N_j\right)$ operations.

Computing all the values therefore costs
\begin{equation*}
\mathcal{O}\left( \tau \left(N_s + N_{s-1} + \cdots + N_1 \right) \right) = \mathcal{O}\left( \tau b^m \left( b^{-w_1} + b^{-w_2} + \cdots + b^{-w_s} \right) \right)
\end{equation*}
operations.

If $\sum_{j=1}^\infty b^{-w_j} < \infty$, then the computational cost is independent of the dimension.

\subsection{Error analysis}

Since the samples are i.i.d., it follows that the estimator is unbiased, that is,
\begin{equation*}
\mathbb{E}(Q(f)) = \mathbb{E}(f).
\end{equation*}

For a given vector $\boldsymbol{x} = (x_1, \ldots, x_s)^\top$ and $\fraku\subseteq [s]$ let $\boldsymbol{x}_\fraku = (x_j)_{j \in \fraku}$ and $\boldsymbol{x}_{-\fraku} = (x_j)_{j \notin \fraku}$.
We now consider the variance of the estimator. Let
\begin{equation*}
\mu_u := \mathbb{E}_{\boldsymbol{x}_\fraku} \mathbb{E}_{\boldsymbol{x}_{-\fraku}} \mathbb{E}_{\boldsymbol{y}_{-\fraku}} f
= \int  \int f(\boldsymbol{x}_u\fraku, \boldsymbol{x}_{-\fraku}) \,\mathrm{d} \boldsymbol{x}_{-\fraku}
\int f(\boldsymbol{x}_\fraku, \boldsymbol{y}_{-\fraku}) \,\mathrm{d} \boldsymbol{y}_{-\fraku} \,\mathrm{d} \boldsymbol{x}_{\fraku}.
\end{equation*}
For instance, $\mu_{\emptyset} = (\mathbb{E}(f))^2$ and $\mu_{\{1, \ldots, s\}} = \int f^2$.

In classical Monte Carlo integration, one studies the variance $\mathrm{Var}(Q(f)) = (\mu_{\{1, \ldots, s\}} - \mu_\emptyset) / N$.
We now show how the reduced MC construction influences the variance.

\begin{theorem}
The variance of the reduced Monte Carlo estimator is given by
\begin{equation*}
\mathrm{Var}(Q(f)) = \sum_{k=1}^s \mu_{\{k, k+1, \ldots, s\}} \prod_{j=k}^s M_j^{-1} \left(1 - \frac{1}{M_{k-1}} \right) - \frac{\mu_{\emptyset}}{M_s},
\end{equation*}
where we set $1-\frac{1}{M_0} = 1$.
\end{theorem}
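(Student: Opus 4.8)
The plan is to compute the second moment $\mathbb{E}[Q(f)^2]$ directly and then subtract $(\mathbb{E} Q(f))^2 = \mu_\emptyset$, using that the estimator is already known to be unbiased. First I would write
\begin{equation*}
\mathbb{E}[Q(f)^2] = \frac{1}{N^2}\sum_{n=0}^{N-1}\sum_{n'=0}^{N-1}\mathbb{E}[f(\bsx_n)f(\bsx_{n'})],
\end{equation*}
so that everything reduces to understanding the joint law of the two points $\bsx_n$ and $\bsx_{n'}$ for each ordered pair $(n,n')$.

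The key step is to describe exactly which coordinates of $\bsx_n$ and $\bsx_{n'}$ coincide. Writing $n$ and $n'$ in the mixed-radix form $n = m_s N_{s+1} + \cdots + m_1 N_2$ and $n' = m'_s N_{s+1} + \cdots + m'_1 N_2$ with digits $m_j, m'_j \in \{0,\ldots,M_j-1\}$, the defining relation \eqref{randpts} shows that the $j$-th coordinate of $\bsx_n$ depends only on $(m_j, m_{j+1}, \ldots, m_s)$. Hence coordinate $j$ is shared by $\bsx_n$ and $\bsx_{n'}$ if and only if $(m_j, \ldots, m_s) = (m'_j, \ldots, m'_s)$. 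I would observe that this condition is monotone in $j$ (if it holds for some $j$ it holds for all larger indices), so the set of shared coordinates is always a tail set $\{k, k+1, \ldots, s\}$, where $k-1$ equals the largest index at which the two digit strings disagree (and $k=1$ precisely when $n=n'$). Conversely, for $j < k$ the tuples $(m_j,\ldots,m_s)$ and $(m'_j,\ldots,m'_s)$ already disagree at position $k-1 \ge j$, so the associated sample indices differ; since all the i.i.d. samples, across coordinates and across sample indices, are mutually independent, the blocks of $\bsx_n$ and $\bsx_{n'}$ outside the tail are genuinely independent.

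With this structure in hand, for a pair whose shared tail is exactly $\fraku = \{k,\ldots,s\}$ I would condition on the common coordinates $\bsx_\fraku$; the remaining blocks $\bsx_{n,-\fraku}$ and $\bsx_{n',-\fraku}$ are then conditionally i.i.d., so
\begin{equation*}
\mathbb{E}[f(\bsx_n)f(\bsx_{n'})] = \mathbb{E}_{\bsx_\fraku}\!\left[\Big(\mathbb{E}_{\bsx_{-\fraku}}[f(\bsx_\fraku,\bsx_{-\fraku})]\Big)^2\right] = \mu_{\{k,\ldots,s\}},
\end{equation*}
which recovers exactly the quantities in the statement, with $k=1$ ($n=n'$) giving $\mu_{[s]}$ and $k=s+1$ ($m_s\neq m'_s$) giving $\mu_\emptyset$. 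It then remains to count, for each $k$, the ordered pairs $(n,n')$ whose shared tail is exactly $\{k,\ldots,s\}$: choose the common tail digits ($\prod_{j=k}^s M_j$ ways), an ordered pair of distinct digits at position $k-1$ ($M_{k-1}(M_{k-1}-1)$ ways), and free digits below position $k-1$ ($(\prod_{j=1}^{k-2}M_j)^2$ ways). Dividing by $N^2 = (\prod_{j=1}^s M_j)^2$ collapses, after cancellation, to $\prod_{j=k}^s M_j^{-1}(1 - 1/M_{k-1})$, matching the weight in the claim (the convention $1-1/M_0 = 1$ handling $k=1$, and $k=s+1$ yielding $1-1/M_s$).

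Finally I would assemble $\mathbb{E}[Q(f)^2]$ as the weighted sum over $k=1,\ldots,s+1$, peel off the $k=s+1$ term as $\mu_\emptyset(1-1/M_s)$, and subtract $\mu_\emptyset$; the two $\mu_\emptyset$ contributions combine to $-\mu_\emptyset/M_s$, giving the stated identity. The main obstacle is the second step: rigorously establishing that the shared coordinates always form a tail set and, crucially, that the unshared coordinates of the two points are \emph{exactly} independent rather than merely uncorrelated. Once the dependence structure is pinned down, the evaluation of each pairwise expectation and the combinatorial counting are routine.
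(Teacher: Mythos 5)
Your proposal is correct and follows essentially the same route as the paper's proof: both expand $\mathbb{E}(Q^2(f))$ over ordered pairs of mixed-radix digit strings, classify each pair by the largest position $k-1$ at which the digits disagree (so that the shared coordinates form the tail set $\{k,\ldots,s\}$), evaluate the pairwise expectation as $\mu_{\{k,\ldots,s\}}$, and perform the identical count $\prod_{j=k}^{s}M_j\,(M_{k-1}^2-M_{k-1})\prod_{j=1}^{k-2}M_j^2$ divided by $N^2$. Your extra care in verifying the tail structure and the exact (not merely uncorrelated) independence of the unshared coordinates makes explicit what the paper leaves implicit, but it is the same argument.
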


\begin{proof}
The variance of $Q(f)$ can be written as $\mathbb{E}(Q^2(f)) - (\mathbb{E}(Q(f)))^2$.
The last term $(\mathbb{E}(Q(f)))^2$ equals 
$(\mathbb{E}(f))^2 = \mu_{\emptyset}$.

We have
\begin{equation*}
Q(f) = \frac{1}{M_1} \sum_{m_1 = 0}^{M_1-1} \cdots \frac{1}{M_s} \sum_{m_s =0 }^{M_s-1} f(y_{1, n_1}, \ldots, y_{s, n_s}),
\end{equation*}
where $n_j = m_s N_{s+1} + \cdots + m_j N_{j+1}$. Hence
\begin{align*}
Q^2(f) = & \frac{1}{M_1^2} \sum_{m_1, m'_1 = 0}^{M_1-1} \cdots \frac{1}{M_s^2}
\sum_{m_s, m'_s =0 }^{M_s-1} f(y_{1, n_1}, \ldots, y_{s, n_s}) f(y_{1, n'_1}, \ldots, y_{s, n'_s}) \\
= & \sum_{u \subseteq \{1, \ldots, s\} }  \frac{1}{N^2} \sum f(y_{1,n_1}, \ldots, y_{s, n_s}) f(y_{1,n'_1}, \ldots, y_{s, n'_s}),
\end{align*}
where the second sum is over all $0 \le m_j, m'_j < M_j$ such that $m_j = m'_j$ for $j \in u$ and $m_j \neq m'_j$ for $j \notin u$.

Let $1 \le k \le s$. If $m_j = m'_j$ for $k \le j \le s$, then $n_j = n'_j$ for $k \le j \le s$ and if $m_{k-1} \neq m'_{k-1}$,
then $n_i \neq n'_i$ for $1 \le i < k$. In this case
\begin{equation*}
\mathbb{E}\left( f(y_{1,n_1}, \ldots, y_{s, n_s}) f(y_{1,n'_1}, \ldots, y_{s, n'_s}) \right) = \mu_{\{k, k+1, \ldots, s \}}.
\end{equation*}
The number of such instances is given by $\prod_{j = k}^s M_j (M_{k-1}^2 - M_{k-1}) \prod_{j=1}^{k-2} M_j^2$.
Since $N = M_1 M_2 \cdots M_s$, we obtain
$\prod_{j = k}^s M_j (M_{k-1}^2 - M_{k-1}) \prod_{j=1}^{k-2} M_j^2 N^{-2} = \prod_{j=k}^s M_j^{-1} (1-M_{k-1}^{-1})$.

If $m_s \neq m'_s$ we obtain that
\begin{equation*}
\mathbb{E}\left( f(y_{1,n_1}, \ldots, y_{s, n_s}) f(y_{1,n'_1}, \ldots, y_{s, n'_s}) \right) = \mu_{\emptyset}.
\end{equation*}
This case occurs $(M_s^2 - M_s) \prod_{j=1}^{s-1} M_j^2$ times, and therefore   $(M_s^2 - M_s) \prod_{j=1}^{s-1} M_j^2 N^{-2} = 1 - M_s^{-1}$.

Using the linearity of expectation we obtain the formula.
\end{proof}

\section{Numerical experiments}\label{sec:num_exp}
In this section we give exemplary numerical results regarding the use of reduced rank-1 lattice point sets for matrix products, as outlined in Section \ref{sec:fast_reduced_comp}.

\subsection{Reduced matrix-vector products}
In each case, we compute the generating vectors $ \bsz = (z_1b^{w_1},\ldots,z_s b^{w_s}) $ depending on the reduction indices $ w_j $ via a reduced CBC construction with product weights $ \gamma_j = 0.7^j $, as developed in \cite{DKLP15}.
For a fair comparison, we do not include in the timings the construction of $ \bsz $ and we average the computing times over 10 runs. Computations are run using MATLAB 2019a on an Octa-Core (Intel(R) Core(TM) i7-10510U CPU @ 1.80GHz) laptop.

As a first example, we illustrate the benefit of Algorithm \ref{alg:fast-mv-prod} compared to the standard matrix-vector product to compute $ P = X A $ for $ A \in \R^{s\times \tau} $.
In Figure \ref{fig:varyalg} we compare different combinations of $ s,m $, for the choice of reduction indices $ w_j = \min (\floor{\log_2 (j)},m) $ and fixed $ b = 2 $. 
We repeat the same experiment on Algorithm \ref{alg:fast-mv-prod-alt} with the same settings.

In Figures \ref{fig:varyalg}--\ref{fig:varyalgw}, the blue graphs show the results for the reduced matrix-matrix product according to Algorithm~\ref{alg:fast-mv-prod}, 
the red graphs show the results for the optimized reduced matrix-matrix product according to Algorithm~\ref{alg:fast-mv-prod-alt}, and the light brown graphs 
show the results for a straightforward implementation of the matrix-matrix product without any adjustments.

We conclude that the computational saving due to Algorithms \ref{alg:fast-mv-prod} and  \ref{alg:fast-mv-prod-alt} is more pronounced for larger $ m,s $.  Note that the right plot is in semi-logarithmic scale.

\begin{figure}[H]
    \includegraphics[clip,trim=2cm 8.5cm 3cm 8.5cm, width=0.5\linewidth]{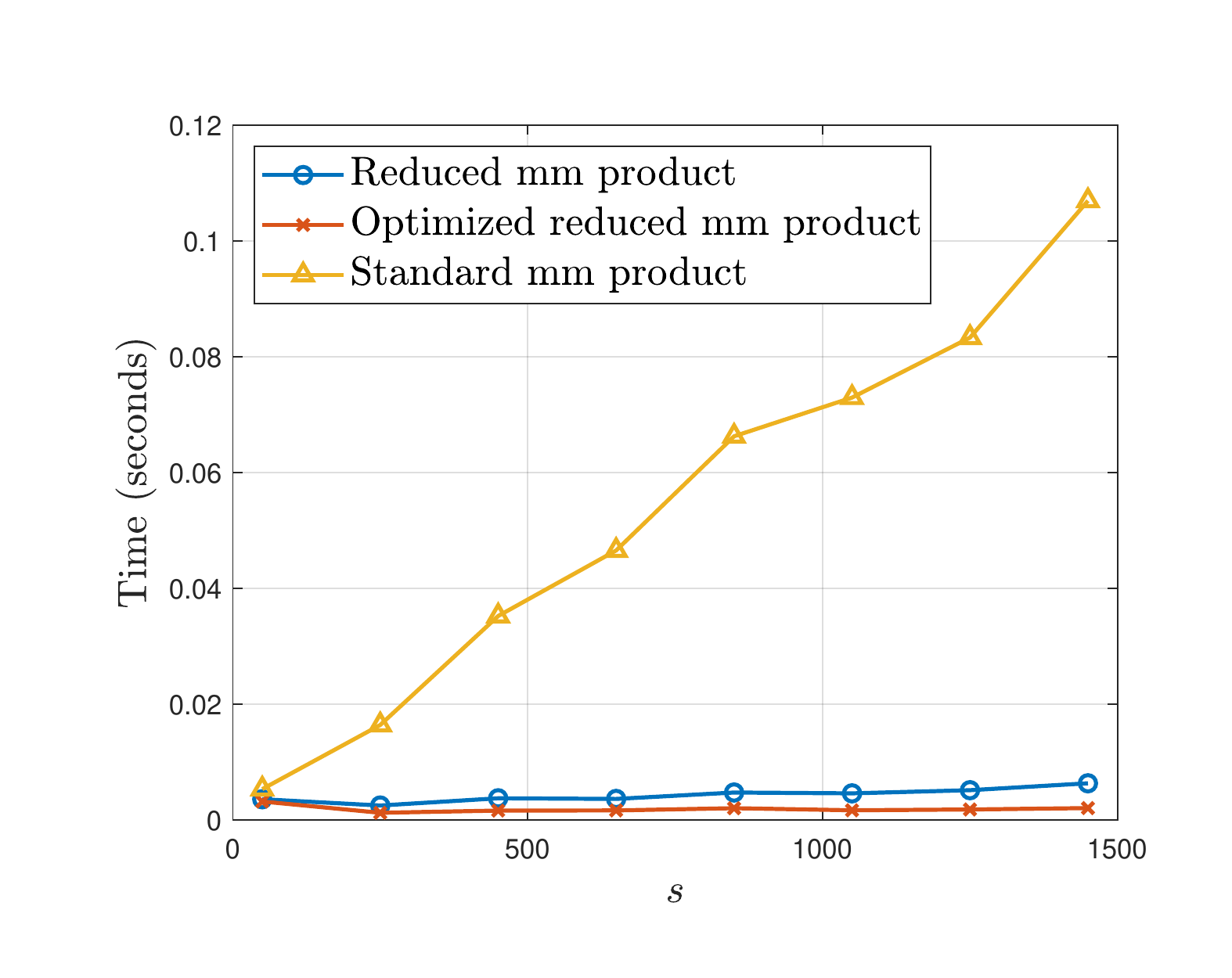}
    \includegraphics[clip,trim=2cm 8.5cm 3cm 8.5cm, width=0.5\linewidth]{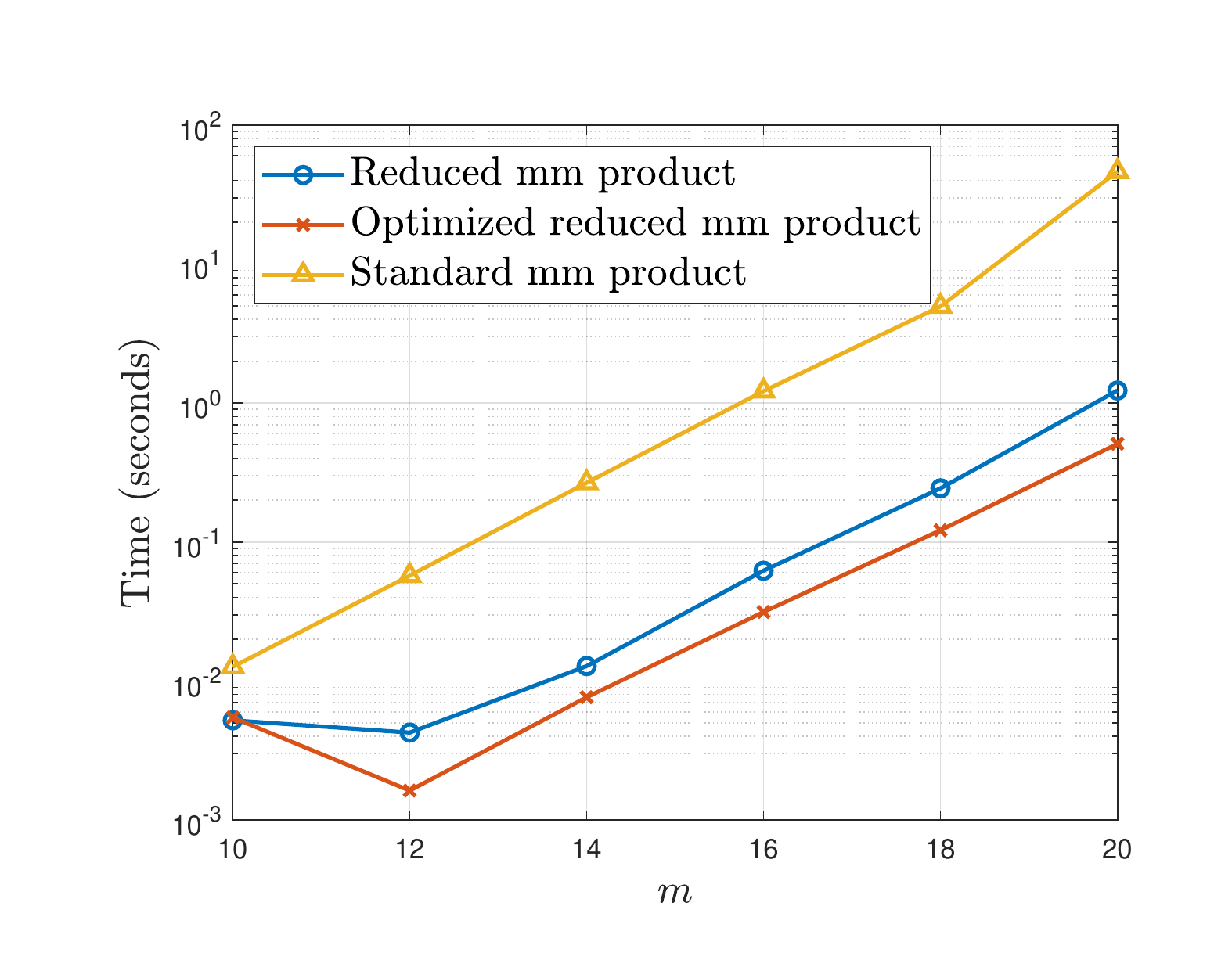}
    \caption{ $ m = 12 $, $ \tau =20 $, varying $ s $ (left) and $ s = 800 $, $ \tau = 20 $, varying $ m $ (right).}\label{fig:varyalg}
\end{figure}

Next we study in Figure \ref{fig:varyalgt} the behavior as the size $ \tau$ increases. Also here, we see a clear advantage
of Algorithms~\ref{alg:fast-mv-prod} and \ref{alg:fast-mv-prod-alt} over a straightforward implementation of the matrix-matrix product.

\begin{figure}[H]
    \includegraphics[clip,trim=2cm 8.5cm 3cm 8.5cm, width=0.5\linewidth]{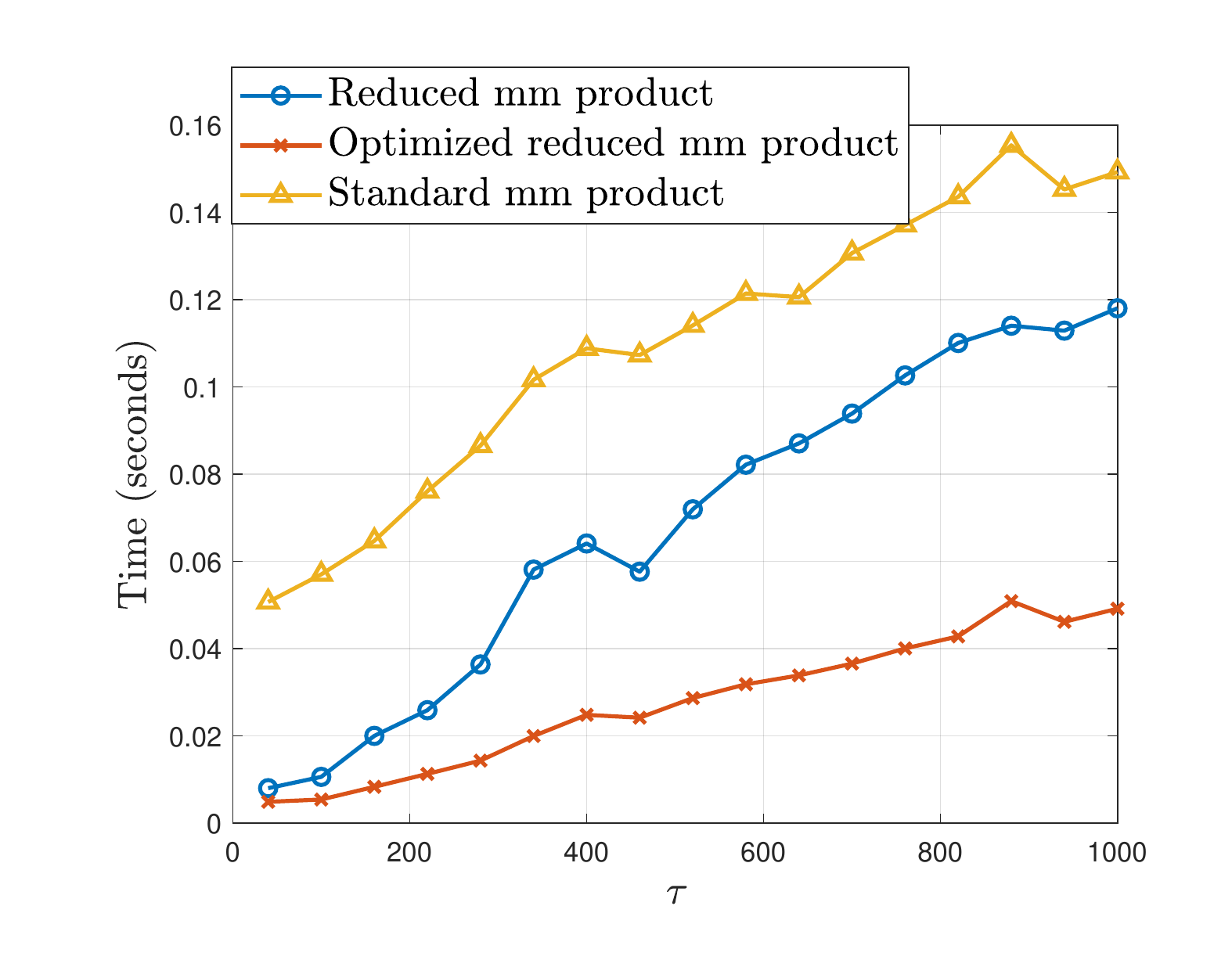}
    \includegraphics[clip,trim=2cm 8.5cm 3cm 8.5cm, width=0.5\linewidth]{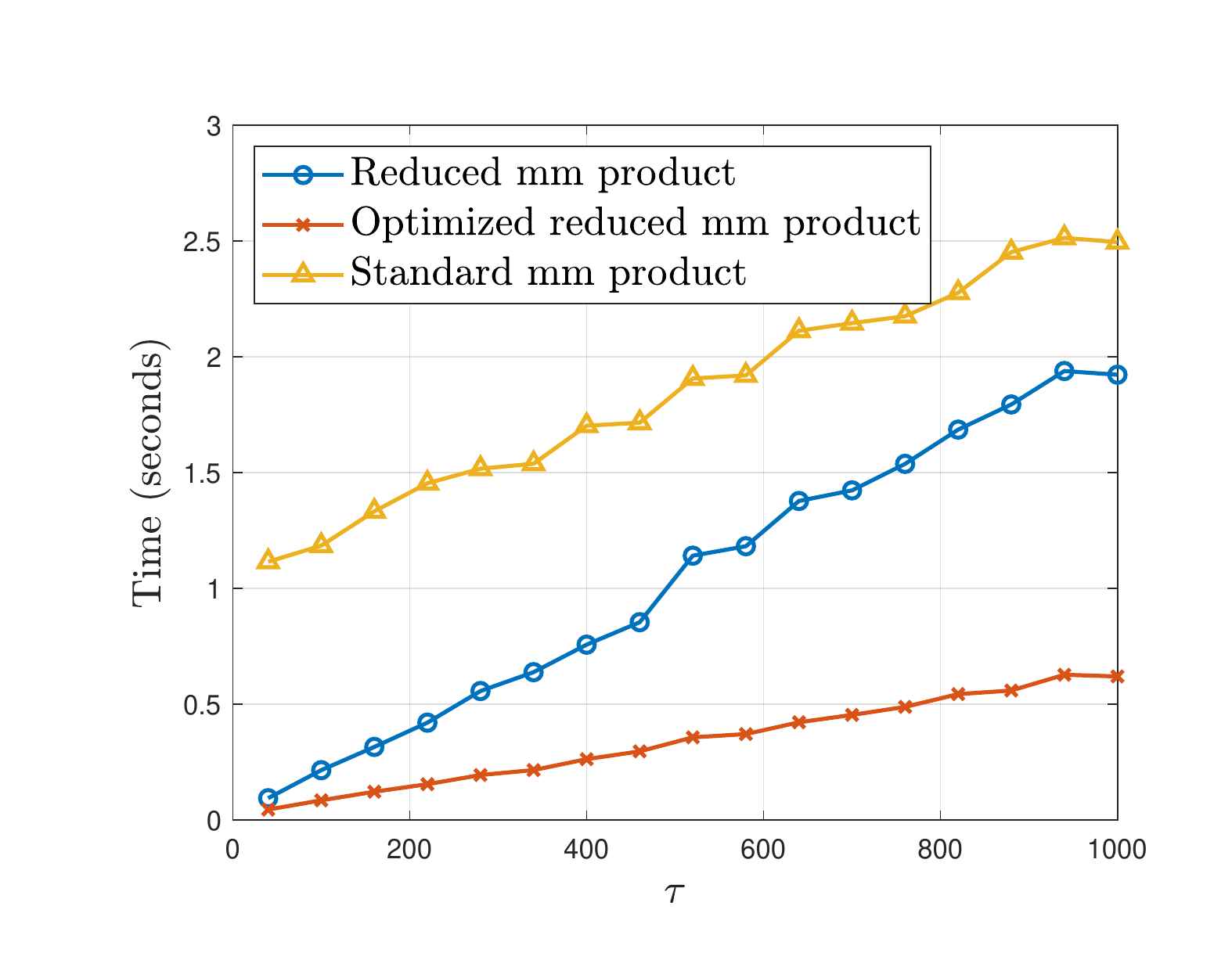}
    \caption{ $ s=800 $, varying $ \tau $ for $ m = 12 $ (left) and $m=16$ (right).}\label{fig:varyalgt}
\end{figure}

When the reduction is less aggressive, that is, $w_j$ increases more slowly, the benefit is still considerable for large $s$ especially for Algorithm \ref{alg:fast-mv-prod-alt}, see Figure \ref{fig:varyalgw}. 
\begin{figure}[H]
    \includegraphics[clip,trim=2cm 8.5cm 3cm 8.5cm, width=0.5\linewidth]{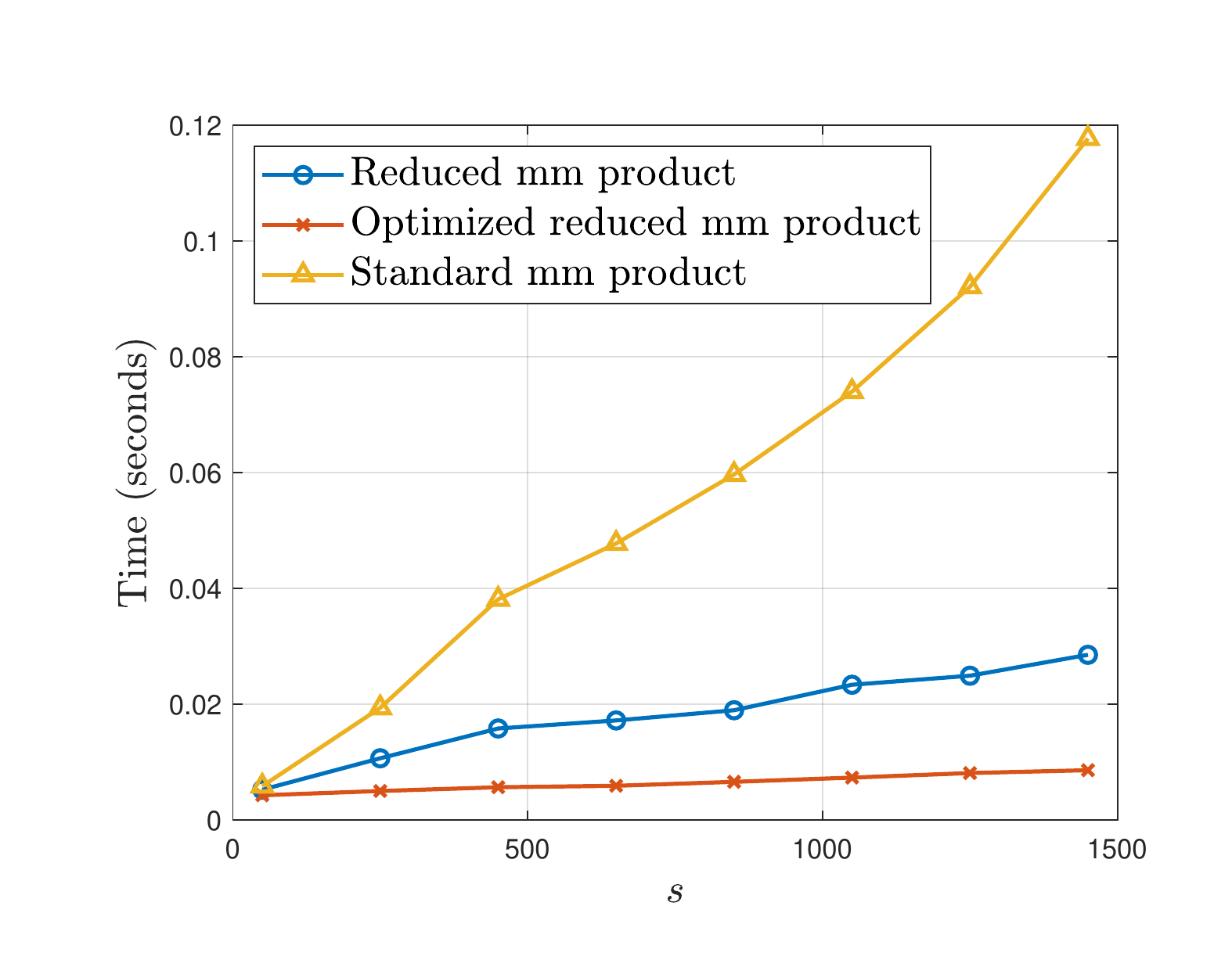}
    \includegraphics[clip,trim=2cm 8.5cm 3cm 8.5cm, width=0.5\linewidth]{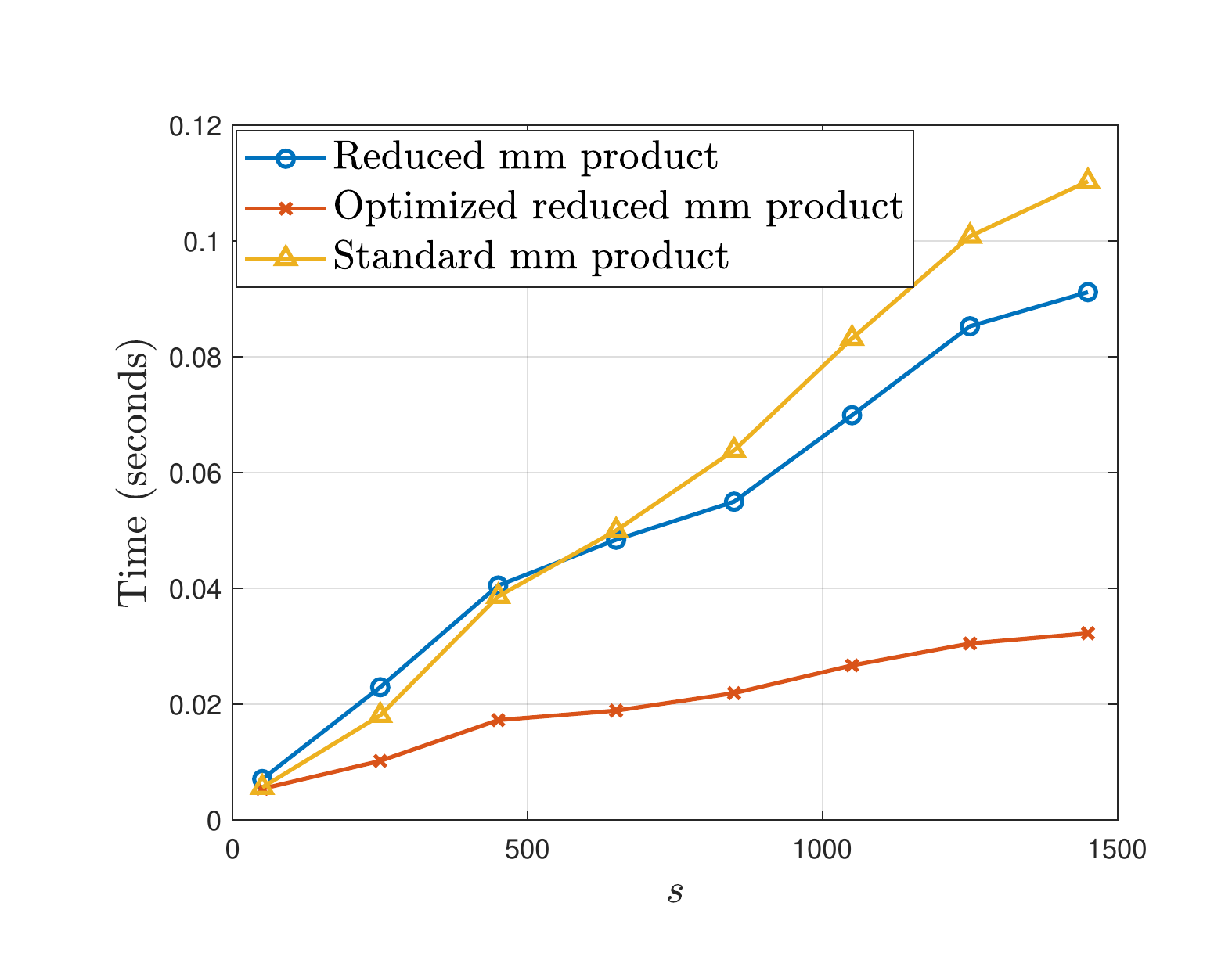}

    \caption{ $ m = 12 $, $ \tau = 20 $, varying $ s $ for $ w_j = \min\left(\floor{\log_2 (j^{1/2})},m\right) $ (left) and $ w_j = \min\left(\floor{\log_2 (j^{1/4})},m\right) $ (right).}\label{fig:varyalgw}
\end{figure}

We now test the reduced matrix-vector product for Monte Carlo integration with respect to the normal distribution. 
As an example, we consider the pricing of a basket option \cite[Section 3.2.3]{G04}.
We define the payoff $ H(S) = \max(\frac1s\sum_{j=1}^s S_j(T) - K, 0 )$, 
where $S_j(T)$ is the price of the $j$-th asset at maturity $T$. 
Under the Black and Scholes model with zero interest rate we have $S_j(T) = S_j(0)\exp(-\Sigma_{jj}/2T + W_j\sqrt{T}) $, where $W_j = LZ $, $ Z \sim \mathcal{N}(0,{\rm Id}_{s\times s})$ and $LL^T = \Sigma$ is the covariance matrix of the random vector $S = (S_1,\ldots,S_s)$.

We set $S_j(0) = 100$ for all $j$, $s = 10, T=1$, strike price $ K=110$, and as the covariance matrix we pick $\sigma = 0.4, \rho= 0.2$ and
\begin{equation*}
    \Sigma = \begin{pmatrix}
        \sigma &\rho& & & \\
        \rho & \sigma &\rho& & \\
        &\ddots&\ddots&\ddots & \\
        &&\rho&\sigma & \rho \\
        & && \rho&\sigma  \\
    \end{pmatrix}.
\end{equation*}
We approximate the option price $\mathbb{E}(S) \approx \frac{1}{b^m}\sum_{k=1}^{b^m} S_j(0)\exp(-\sigma/2T + L\bsx_k\sqrt{T})$, with $\bsx_k$ random samples from $Z$. The main work is to compute $XL^{\top}$ (recall that $X$ was defined in \eqref{QMC_matrix}) and thus the reduced matrix-vector multiplication can be beneficial in this example.
Results for different choices of reduction indices $w_j$ are displayed in Figure \ref{fig:optionw}, where we plot the mean error over $R=5$ repetitions for different values of reduction indices, using $Rb^m, m = 25$, Monte Carlo samples for the reference value. Note that the performance of QMC methods in this illustration appears to 
be not particularly strong as compared to standard Monte Carlo, as we consider a setting without coordinate weights, which usually is unfavorable for QMC methods.

\begin{figure}[H]
    \includegraphics[clip,trim=2cm 8.5cm 3cm 8.5cm, width=0.5\linewidth]{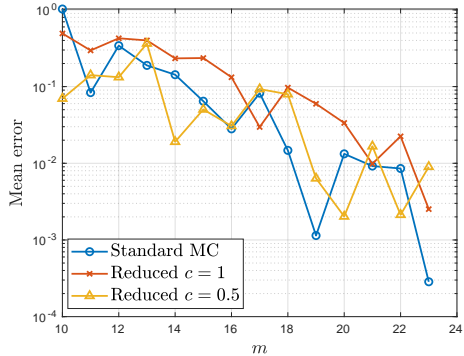}
    \caption{ Option pricing example: standard Monte Carlo (corresponding to $c=0$, compared with reduced Monte Carlo for $ w_j = \min\left(\floor{\log_2 (j^{c})},m\right) $, $c\in \{ 1, 0.5\}$.  }\label{fig:optionw}
\end{figure}  

\section*{Acknowledgements}

Josef Dick is supported by the Australian Research Council Discovery Project DP220101811. Adrian Ebert and Peter Kritzer acknowledge the support of the Austrian Science Fund (FWF) Project F5506, which is part of the Special Research Program ``Quasi-Monte Carlo Methods: Theory and Applications''. Furthermore, Peter Kritzer has partially been supported by the Austrian Science Fund (FWF) Project P34808. For the purpose of open access, the authors have applied a CC BY public copyright licence to any author accepted manuscript version arising from this submission.

\begin{small}
	\noindent\textbf{Authors' addresses:}\\

        \noindent Josef Dick\\
	School of Mathematics and Statistics\\
	University of New South Wales (UNSW)\\
	Sydney, NSW, 2052, Australia\\
	\texttt{josef.dick@unsw.edu.au}\\

        \noindent Adrian Ebert\\
                Johann Radon Institute for Computational and Applied Mathematics (RICAM)\\
	Austrian Academy of Sciences\\
	Altenbergerstr. 69, 4040 Linz, Austria\\
    and\\
    Centrica Business Solutions\\
    Roderveldlaan 2, 2600 Antwerpen, Belgium
	\texttt{adrian.ebert@hotmail.com}\\

        \noindent Lukas Herrmann\\
        Johann Radon Institute for Computational and Applied Mathematics (RICAM)\\
	Austrian Academy of Sciences\\
	Altenbergerstr. 69, 4040 Linz, Austria\\
	\texttt{lukas.herrmann@alumni.ethz.ch}\\
 
	\noindent Peter Kritzer\\
	Johann Radon Institute for Computational and Applied Mathematics (RICAM)\\
	Austrian Academy of Sciences\\
	Altenbergerstr. 69, 4040 Linz, Austria\\
	\texttt{peter.kritzer@oeaw.ac.at}\\
	
	\noindent Marcello Longo\\
	Seminar for Applied Mathematics\\
    ETH Z\"urich\\
    R\"amistrasse 101, 8092 Z\"urich, Switzerland\\
    \texttt{marcello.longo@sam.math.ethz.ch}

\end{small}

\end{document}